\documentclass{amsart}

\newtheorem{theorem}{Theorem}[section]
\newtheorem{lemma}[theorem]{Lemma}
\newtheorem{corollary}[theorem]{Corollary}

\theoremstyle{definition}
\newtheorem{definition}[theorem]{Definition}
\newtheorem{example}[theorem]{Example}

\theoremstyle{remark}
\newtheorem{remark}[theorem]{Remark}



\usepackage{amscd}
\usepackage{amsmath}
\usepackage{amsbsy}
\usepackage{amsfonts}
\usepackage{amssymb}
\usepackage{amsthm}
\usepackage[mathscr]{eucal}
\usepackage[OT2,T1]{fontenc}
\usepackage[T3,T4]{fontenc}
\usepackage{latexsym}
\usepackage{enumerate}
\usepackage{graphicx}
\usepackage{color}	
\usepackage{epsfig}
\usepackage[numbers]{natbib}
\usepackage{listings}
\usepackage{manfnt}
\usepackage{psfrag}
\usepackage{verbatim}
\usepackage{url}
\usepackage{undertilde}
\usepackage{fancyhdr}


\textwidth=14.5cm
\oddsidemargin=+0.8cm
\evensidemargin=+0.8cm

\newcommand{\Z}{\mathbb{Z}}
\newcommand{\Q}{\mathbb{Q}}

\newcommand{\smatr}[4]{\Bigl( \begin{smallmatrix} 
             #1 & #2 \\ #3 & #4 \end{smallmatrix} \Bigr) }
\newcommand{\Red}[1]{\widetilde{#1}}

\newcommand{\svect}[2]{\Bigl( \begin{smallmatrix}{#1}\\{#2}\end{smallmatrix}\Bigr) }

\newcommand{\Pt}[1]{\mathscr{#1}}
\newcommand{\Padd}{\boxplus}
\newcommand{\Psub}{\boxminus}
\DeclareSymbolFont{cyrls}{OT2}{wncyr}{m}{n}
\DeclareMathSymbol{\X}{\mathalpha}{cyrls}{"48}
\DeclareMathSymbol{\Y}{\mathalpha}{cyrls}{"55}
\DeclareMathSymbol{\Si}{\mathalpha}{cyrls}{"62}
\DeclareMathSymbol{\Sha}{\mathalpha}{cyrls}{"58}

\DeclareSymbolFont{cows}{T4}{fcr10}{m}{n}
\DeclareMathSymbol{\EB}{\mathalpha}{cows}{"80}
\DeclareMathSymbol{\ED}{\mathalpha}{cows}{"81}
\DeclareMathSymbol{\EF}{\mathalpha}{cows}{"84}
\DeclareMathSymbol{\EK}{\mathalpha}{cows}{"88}
\DeclareMathSymbol{\EN}{\mathalpha}{cows}{"89}
\DeclareMathSymbol{\EP}{\mathalpha}{cows}{"91}
\DeclareMathSymbol{\ET}{\mathalpha}{cows}{"9B}
\DeclareMathSymbol{\EU}{\mathalpha}{cows}{"8E}
\DeclareMathSymbol{\ML}{\mathalpha}{cows}{"4C}
\DeclareMathSymbol{\MM}{\mathalpha}{cows}{"4D}
\DeclareMathSymbol{\MN}{\mathalpha}{cows}{"4E}
\DeclareMathSymbol{\MS}{\mathalpha}{cows}{"53}
\DeclareMathSymbol{\MT}{\mathalpha}{cows}{"54}

\DeclareSymbolFont{cheese}{T3}{tipa10}{m}{n}
\DeclareMathSymbol{\Em}{\mathalpha}{cheese}{"4D}
\DeclareMathSymbol{\Ef}{\mathalpha}{cheese}{"EA}
\DeclareMathSymbol{\Eg}{\mathalpha}{cheese}{"A5}
\DeclareMathSymbol{\EG}{\mathalpha}{cheese}{"E5}

\begin{document}

\title[Proof of $\text{Cl}^+(\Q(\sqrt{\Delta }))^2 \simeq \text{ker}\bigl( H^1(G,\mathcal{P}(\overline{\Z})) \rightarrow H^1(G,\mathcal{P}(\overline{\Q})) \bigr)$]{An isomorphism between the narrow ideal class group of squared ideals of a quadratic number field and the kernel of a homomorphism between cohomology groups for Pell conics}

\author[S. Hambleton]{Samuel A. Hambleton}

\address{School of Mathematics and Physics, University of Queensland, St. Lucia, Queensland, Australia 4072}

\email{sah@maths.uq.edu.au}

\subjclass[2010]{Primary 11R29, 11G30; Secondary 11R34, 11R11}

\date{August 08, 2011.}

\keywords{Pell Conics, ideal class group}

\begin{abstract}
Two proofs are provided that the narrow ideal class group of squared ideals of a quadratic number field is isomorphic to the kernel of a homomorphism between cohomology groups for Pell conics, Lemmermeyer's obstruction to descent for Pell conics. These proofs make use of a particular subgroup of a Pell conic over algebraic numbers.
\end{abstract}

\maketitle

\section{Introduction}

Let $E$ be an elliptic curve. The Tate-Shafarevich group for $E(\Q)$ measures how difficult it may be to find a rational point of $E$ using the process of descent, see \cite{Milne,Silverman}. This group is defined in terms of the kernel of a homomorphism between cohomology groups
\begin{equation}\label{ShaEll}
\Sha (E/ \Q ) = \text{ker}\Bigl( H^1(G, E(\overline{\Q})) \longrightarrow \prod_{p \in {\text{ primes}}} H^1(G, E(\overline{\Q}_p)) \Bigr) .
\end{equation}
Non-trivial elements of $\Sha (E/ \Q )$ correspond to principal homogeneous spaces for $E$ over $\Q$ which have a $p$-adic point for every prime $p$, but no rational point. We will define the Tate-Shafarevich group for Pell conics, $\Sha (\mathcal{P}/ \Z )$, in a similar way to Eqn. \eqref{ShaEll} and show that our definition is consistent with Lemmermeyer's result \cite{LemNewBook} that $\Sha (\mathcal{P}/ \Z ) \simeq \text{Cl}^+(\Q(\sqrt{\Delta }))^2$, the narrow class group of binary quadratic forms $Q$ of discriminant $\Delta$, for which $Q = Q' \cdot Q'$ where $\cdot$ is composition of forms.

By the Hasse-Minkowski theorem, see \cite{Serre}, for a quadratic polynomial $$f(x, y) = a_{20} x^2 +a_{11} x y + a_{02} y^2 + a_{10} x + a_{01} y +a_{00}$$ with rational coefficients, there exists a rational solution to 
\begin{equation}\label{HasMin}
f(x, y) = 0 ,
\end{equation}
if and only if there exists a real solution to Eqn. \eqref{HasMin} and a solution in the field of $p$-adic numbers $\Q_p$ for every prime $p$. This provides a means of deciding whether the binary quadratic form 
\begin{equation}\label{intrbinform}
Q'(t,u) = A' t^2 + B t u + C u^2 
\end{equation}
represents $1$ in rational numbers, and finding such a point is generally easy. After defining abelian groups on the rational, algebraic and $p$-adic points of the conic $C: Q'(t, u) = 1$, with $Q'$ as in Eqn. \eqref{intrbinform}, it can be shown that the Hasse-Minkowski theorem implies that $$\text{ker}\Bigl( H^1(G, E(\overline{\Q})) \longrightarrow \prod_{p \in {\text{ primes}}} H^1(G, E(\overline{\Q}_p)) \Bigr)$$ is always trivial, however it says nothing about whether, given there exists a rational point $q = (t, u)$ satisfying 
\begin{equation}\label{garrrv}
Q'(t,u) = 1,
\end{equation}
there exists an integer representation of $1$ by $Q'$, Eqn. \eqref{intrbinform}. 

Let $\Delta $ be the fundamental discriminant of a quadratic number field $K = \Q(\sqrt{\Delta })$. This means that 
\begin{equation} 
\Delta =  
\left\{\begin{array}{ll} 
d = 1 + 4 \Em  &\mbox{if $d \equiv 1 \pmod{4}$} \\
4 d = 4 \Em &\mbox{if $d \equiv 2,3 \pmod{4}$} 
\end{array}\right. 
\end{equation} 
where $d$ is a square-free integer. We let $\Si \in \{ 0, 1\}$ be defined by
\begin{equation}
\Delta = \Si + 4 \Em .
\end{equation}
Pell conics \cite{Lem03,LemDes3,MCP} are affine curves of genus $0$ given by either of the equations\footnote{Completing the square transforms Eqn. \eqref{nrmform} into Eqn. \eqref{tracform}.} 
\begin{eqnarray}
\label{nrmform} \mathcal{P} : x^2 +\Si x y - \Em y^2 = 1 , \\
\label{tracform} \mathcal{C} : \X^2 -\Delta \Y^2 = 4 .
\end{eqnarray}
For the purposes of this article it is more appropriate to consider Pell conics in the form $\mathcal{P}$ due to the necessity to use binary quadratic forms. It is convenient to introduce the matrix
\begin{equation}\label{addmatrix} 
\MN_{\Pt{P}} = \smatr{x}{\Em y}{y}{x + \Si y} .
\end{equation}
Incidentally, Pell conics may be expressed as $\mathcal{P} : \text{det}(\MN_{\Pt{P}}) = 1$. The points of a Pell conic, defined over various rings and fields, form an abelian group with the binary operation $\Padd$ for $\mathcal{P}$ given by 
\begin{align}
\label{grplawnm}  \Pt{P}_1 \Padd \Pt{P}_2 & = \MN_{\Pt{P}_2} \utilde{\Pt{P}_1} = \bigl( x_1 x_2 + \Em y_1 y_2, x_1 y_2 +x_2 y_1 + \Si y_1 y_2 \bigr) , \\
                  \Pt{P}_1 \Psub \Pt{P}_2 & = \MN_{\Pt{P}_2}^{-1} \utilde{\Pt{P}_1} = \bigl( x_1 x_2 + \Si x_1 y_2 - \Em y_1 y_2 , x_2 y_1 - x_1 y_2 \bigr) , 
\end{align}
We use $\Psub$ to denote subtraction. The identity of $\mathcal{P}$ is the point $\Pt{O} = (1, 0)$.

Non-trivial elements of the Tate-Shafarevich group for Pell conics, $\Sha (\mathcal{P} / \Z )$, are described by particular conics, Eqn. \eqref{garrrv} which have rational but no rational integer solutions. If $\mathcal{P}$ is a Pell conic of discriminant $\Delta$ and $Q'$ is a binary quadratic form of discriminant $\Delta$, then an integral solution to Eqn. \eqref{garrrv} provides, through unimodular substitution, an integral point of $\mathcal{P}$, otherwise this kind of solution to $\mathcal{P}(\Z )$ is obstructed.

The relationship between the principal binary quadratic form 
\begin{equation}\label{princform}
Q_0(x,y) : x^2 + \Si x y - \Em y^2 ,
\end{equation}
of fundamental discriminant $\Delta $, and the binary quadratic form $Q'(t,u)$, Eqn. \eqref{intrbinform}, of the same discriminant $B^2 - 4 A' C$ via automorphs is known. If $\Delta > 0$, there are infinitely many pairs of rational integers $(x, y)$ satisfying $Q_0(x, y) = 1$. Any such pair provides an automorph of $Q'(t, u)$. Recall that this means that an integer point $\Pt{P} = (x, y)$ of the Pell conic $\mathcal{P} : Q_0(x, y) = 1$ yields a matrix
\begin{equation}\label{defautomorph} 
M_{\Pt{P}} = \smatr{x- \frac{B - \Si }{2} y}{-C y}{A' y}{x + \frac{B + \Si }{2} y} \in \text{Sl}_2( \Z ), 
\end{equation}
and the replacement $\svect{t}{u} \mapsto M_{\Pt{P}} \svect{t}{u}$ does not affect the coefficients of $Q'$. 
\begin{equation*}
Q' \Bigl( M_{\Pt{P}} \svect{t}{u} \Bigr) = Q_0(x, y) Q'(t, u) = Q'(t, u) .  
\end{equation*}
Let 
\begin{equation}\label{bigN}
\MS_{Q'} = \smatr{\frac{1}{\sqrt{A'}}}{\frac{\Si - B}{2 \sqrt{A'}}}{0}{\sqrt{A'}} . 
\end{equation}
The change of variables $\svect{t}{u} \mapsto \MS_{Q'} \svect{x}{y}$ transforms $Q'(t, u)$ into the principal form $Q_0(x, y)$ , and $\svect{x}{y} \mapsto \MS_{Q'}^{-1} \svect{t}{u}$ reverses this: 
\begin{eqnarray} 
\label{substtwo} Q_0(x, y) & = & Q' \Bigl( \MS_{Q'} \svect{x}{y} \Bigr) , \\
\label{substone}  Q'(t, u) & = & Q_0 \Bigl( \MS_{Q'}^{-1} \svect{t}{u} \Bigr) .
\end{eqnarray} 
If $A'$ is a square, $A' = A^2$, then $\MS_{Q'} \in \text{Sl}_2( \Q )$ and the substitutions of Eqn.s \eqref{substone} and \eqref{substtwo} provide a relationship between rational representations of $n$ by $Q'$ and rational representations of $n$ by the principal form $Q_0$. By transport of the structure of $\mathcal{P}(\Q )$, via the map $(x, y) \mapsto \bigl( \frac{x - \beta y}{A}, A y \bigr)$, the set of rational points of the conic 
\begin{equation}\label{sqrQ} 
Q(t, u) = A^2 t^2 + (2 \beta + \Si ) t u + \frac{Q_0(\beta , 1)}{A^2} u^2 = 1 
\end{equation} 
becomes an abelian group with identity $o = \bigl( \frac{1}{A}, 0 \bigr)$. The middle coefficient $B$ of a binary quadratic form of fundamental discriminant $\Delta = 4 \Em + \Si$ may always be expressed with $B = 2 \beta + \Si$, with $\beta \in \Z$, and since $Q$ is a binary quadratic form of fundamental discriminant $\Delta$, the leading coefficient $A^2$ of $Q$ divides $Q_0(\beta , 1)$. The only specialization in Eqn. \eqref{sqrQ} is that the coefficient of $t^2$ is a square. Let 
\begin{center} 
$\mathfrak{F}^2$ {\em be the set of all binary quadratic forms of discriminant $\Delta$ with leading coefficient a square, Eqn. \eqref{sqrQ}, and $0 \leq \beta < A^2$.} 
\end{center}

We will focus our attention on the binary quadratic forms $Q \in \mathfrak{F}^2$. With respect to a form $Q \in \mathfrak{F}^2$, we define $ L_{Q,q}$, while $\MM_{\Pt{P}}$ and $\MS_{Q'}$, Eqn.s \eqref{defautomorph}, \eqref{bigN} become
\begin{eqnarray}
\label{subtrmp}           \ML_{Q,q} & = & \smatr{A^2 t + (\beta + \Si ) u}{\beta t + \frac{Q_0(\beta , 1)}{A^2} u}{-u}{t} , \\
\label{automorphmat}   \MM_{\Pt{P}} & = & \smatr{x - \beta y}{- \frac{Q_0(\beta , 1)}{A^2} y}{A^2 y}{x + ( \beta + \Si ) y} , \\
\label{Nin}                 \MS_{Q} & = & \smatr{\frac{1}{A}}{\frac{-\beta }{A}}{0}{A} .
\end{eqnarray}

This article is concerned with the correspondence, Eqn.s \eqref{substtwo} and \eqref{substone}, between algebraic integer points $q = (t, u)$ satisfying Eqn. \eqref{sqrQ} ranging over any form $(A^2, B, C)$ of fundamental discriminant $\Delta $, and the group of points of the Pell conic $\mathcal{P}$ over algebraic numbers. This has been motivated by the desire to justify defining the Tate-Shafarevich group for Pell conics as the kernel of a homomorphism between cohomology groups: $H^1(G, \mathcal{P}(\overline{\Z })) \rightarrow  H^1(G, \mathcal{P}(\overline{\Q }))$. Lemmermeyer \cite{LemNewBook} has shown that binary quadratic forms of discriminant $\Delta $ may be considered as torsors $\mathcal{T} : A t^2 + B t u + C u^2 = 1$ for $Q_0(x, y) = 1$, where the addition map for the principal homogeneous space is defined by automorphs of binary quadratic forms. The torsors which have rational but no integer points are the torsors corresponding to non-trivial elements of the Tate-Shafarevich group $\Sha (\mathcal{P} / \Z )$ for Pell conics. Lemmermeyer \cite{LemNewBook} has proved that $\Sha (\mathcal{P} / \Z ) \simeq \text{Cl}^+(K)^2$. Consideration of elliptic curves indicates that $\Sha (\mathcal{P} / \Z )$ may have a cohomological definition. We will give two proofs of the isomorphism $$\text{Cl}^+(K)^2 \simeq \text{ker}\bigl( H^1(G,\mathcal{P}(\overline{\Z})) \rightarrow H^1(G,\mathcal{P}(\overline{\Q})) \bigr) ,$$ and thereby justify the cohomological definition of the Tate-Shafarevich group $\Sha (\mathcal{P} / \Z )$ for Pell conics. The first proof uses the approach of taking Galois cohomology on a short exact sequence of $G_{\overline{\Q}/ \Q}$-modules, while the second proof offers insight on the addition and subtraction maps of a principal homogeneous space but applies a tedious calculation.

Schmid \cite{Schmid} studied the group structure of points of the Pell equation $\mathcal{C} : x^2 - d y^2 = 1$ defined over the ring of integers $\mathcal{O}_K$ of a given number field $K$, with $d \in \mathcal{O}_K$, by looking at the injective map $\mathcal{C}(\mathcal{O}_K) \rightarrow \mathcal{O}_K[\sqrt{d}]$, sending $(x, y)$ to $x + y\sqrt{d}$. If $K = \overline{\Q}$, the algebraic closure of $\Q$, and $d \in \Z$, it is a clear consequence of Schmid's results that $\mathcal{C}(\overline{\Z})$, over all algebraic integers, is not a finitely generated group.

A surjective homomorphism was given in \cite{PellSurf} from the group of primitive integer points of a Pell surface $\mathcal{S}_n : Q_0(B, C) = A^n$, again with $\Delta = \Si +4 \Em$ a fundamental discriminant, onto the $n$-torsion subgroup of the narrow ideal class group $\text{Cl}^+(K)[n]$, of the quadratic number field $K = \Q(\sqrt{\Delta })$. A primitive integer point $(A, B, C) \in \mathcal{S}_n(\Z)$ satisfies 
\begin{equation}\label{primgcdrat}
\text{gcd}(A, C) = 1 = \text{gcd}(A, \Delta ) .
\end{equation} 
When $n = 2$, a point $(A, B, C) \in \mathcal{S}_2(\Z )$ corresponds to a primitive rational point of $\mathcal{P}$. Of course all rational points $\Bigl( \frac{B}{A}, \frac{C}{A} \Bigr) \in \mathcal{P}(\Q )$ may be written to satisfy Eqn. \eqref{primgcdrat}.

Soleng \cite{Soleng} considered `primitive points' of the elliptic curve $E : y^2 = x^3 + a_2 x^2 + a_4 x + a_6$, with $a_2, a_4, a_6 \in \Z$, where the point $\bigl( \frac{A}{C^2}, \frac{B}{C^3} \bigr) \in E(\Q)$ with $\text{gcd}(A, C) = \text{gcd}(B, C) = 1$ belongs to the subgroup $E(\Q)_{\text{prim}}$ if $\text{gcd}(A, 2 B, A^2 + a_2 A C^2 + a_4 C^4) = 1$. Soleng showed that there is a homomorphism mapping $E(\Q)_{\text{prim}}$ to the ideal class group of $\Q(\sqrt{a_6})$.

We would like to define what it means for an algebraic point of a Pell conic to be `primitive'. Once this has been defined we will demonstrate that the primitive algebraic points form a subgroup $\mathcal{P}(\overline{\Q})_{\text{prim}}$ of $\mathcal{P}(\overline{\Q})$, and there is a surjective homomorphism from this subgroup onto the narrow ideal class group of squared ideals $\text{Cl}^+(K)^2$ of a quadratic number field $K = \Q(\sqrt{\Delta })$. In fact we will show that there is an exact sequence 
$$\begin{CD} 
1 @>>> \mathcal{P}(\Q)\oplus \mathcal{P}(\overline{\Z}) @>>> \mathcal{P}(\overline{\Q})_{\text{prim}} @>>> \text{Cl}^+(K)^2 @>>> 1 .
\end{CD}$$
The denominator of the algebraic number $\alpha \in \overline{\Q}$, denoted $\text{den}(\alpha )$, was defined in \cite{ARW} to be the least positive rational integer such that $\text{den}(\alpha ) \alpha $ is an algebraic integer. We use $N_{\overline{\Q}/\Q}(\alpha )$ for the norm of $\alpha \in \overline{\Q}$, the product of the roots of the minimum polynomial of $\alpha$. For the ring of all algebraic integers we use $\overline{\Z}$.
\begin{definition}\label{indepvtwo} 
Define $\mathcal{P}(\overline{\Q})_{\text{prim}}$ to be the set of all $(x, y) \in \mathcal{P}(\overline{\Q})$, letting $A = \text{den}(y)$, such that 
\begin{itemize}
\item $Ax $ is an algebraic integer,
\item $\text{gcd}(A^2 N_{\overline{\Q}/\Q}(y), A^2) = 1 = \text{gcd}(A, \Delta )$,
\item letting $a$ be the least positive integer satisfying $a \equiv (A^2 N_{\overline{\Q}/\Q}(y))^{-1} \pmod{A^2}$, 
$$b = \left\{\begin{array}{ll} 
a A^2 N_{\overline{\Q}/\Q}(y)\frac{x}{y} &\mbox{if $y \not= 0$} \\ 
0 &\mbox{if $y = 0$} 
\end{array}\right. $$ is a rational integer.
\end{itemize}
\end{definition}

\section{$\mathcal{P}(\overline{\Q})_{\text{prim}}$ is a subgroup of $\mathcal{P}(\overline{\Q})$}

We can show that there is an equivalent definition of primitive algebraic point of a Pell conic and we will use this equivalence to show that $\mathcal{P}(\overline{\Q})_{\text{prim.}}$ is a subgroup of $\mathcal{P}(\overline{\Q})$.
\begin{lemma}\label{equaldens}
Let $\alpha \in \overline{\Q}$. Then $\text{den}(\alpha ) = \text{den}(N_{\overline{\Q}/\Q}(\alpha )/\alpha )$.
\end{lemma}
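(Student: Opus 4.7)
The plan is to identify $\beta := N_{\overline{\Q}/\Q}(\alpha)/\alpha$ as the product of the non-identity Galois conjugates of $\alpha$ and then exploit the Galois invariance of $\overline{\Z}$ to read off the equality of denominators.

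To set up, I would let $f(x) = \prod_{i=1}^n (x - \alpha_i)$ be the minimum polynomial of $\alpha = \alpha_1$ over $\Q$, and write $N := N_{\overline{\Q}/\Q}(\alpha) = \prod_{i=1}^n \alpha_i$, so that $\beta = \prod_{i=2}^n \alpha_i$ and the Galois conjugates of $\beta$ over $\Q$ are precisely the elements $\beta_i = N/\alpha_i$. The characterization I want to invoke is that, for any positive rational integer $D$, the element $D\gamma$ lies in $\overline{\Z}$ if and only if every Galois conjugate $D\gamma_i$ lies in $\overline{\Z}$, because $\overline{\Z}$ is stable under every $\Q$-embedding $\overline{\Q}\to\overline{\Q}$; hence $\text{den}(\gamma)$ is the least positive $D$ with this property.

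In the case $n = 2$, which is the setting governing the paper since $\alpha$ will live in the quadratic field $\Q(\sqrt{\Delta})$, $\beta$ is simply the nontrivial Galois conjugate $\sigma(\alpha)$, and the equivalence $D\alpha \in \overline{\Z} \iff \sigma(D\alpha) = D\sigma(\alpha) = D\beta \in \overline{\Z}$ gives $\text{den}(\alpha) = \text{den}(\beta)$ in a single line, since $\sigma$ fixes $D \in \Z$ and carries $\overline{\Z}$ onto itself. For higher-degree $\alpha$ I would instead compute the minimum polynomial of $\beta$ explicitly as $g(x) = (-1)^n N^{-1} x^n f(N/x)$, read off its coefficients from those of $f$ and $N$, and compare the two minimum polynomials prime by prime.

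The main obstacle is the general-degree comparison of $p$-adic valuations of coefficients: the $p$-part of $\text{den}(\gamma)$ is controlled by the most negative $v_p$ among the conjugates of $\gamma$ (essentially a Newton-polygon invariant), and one must verify that this maximum agrees for $\alpha$ and for $\beta$ at every prime. In the quadratic setting relevant to $\mathcal{P}(\overline{\Q})_{\text{prim}}$, however, this subtlety collapses, and the Galois-conjugate argument above is the whole story; this is the proof I would present.
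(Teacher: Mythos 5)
Your degree-two argument is correct and, in that case, coincides with the paper's own proof: the paper observes that $\alpha$ and $N_{\overline{\Q}/\Q}(\alpha)/\alpha$ share a minimum polynomial and then applies the identity $\text{den}(\alpha)=\text{den}(f)$ from Arno--Robinson--Wheeler, which is precisely your remark that conjugate elements have equal denominators because $\overline{\Z}$ is stable under every embedding $\overline{\Q}\rightarrow\overline{\Q}$. The gap is your decision to prove only the case $[\Q(\alpha):\Q]=2$. The lemma is stated for arbitrary $\alpha\in\overline{\Q}$ and is applied to the coordinate $y$ of an arbitrary point of $\mathcal{P}(\overline{\Q})$, for instance throughout Lemma \ref{equivadef} and Theorem \ref{thelemm}; such $y$ does not lie in $\Q(\sqrt{\Delta})$ in general, and the surjectivity arguments later in the paper (via Birch's theorem, Corollary \ref{birchcor}) produce points whose coordinates are units in rings of integers of extensions of unbounded degree. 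So the claim that the quadratic case is ``the setting governing the paper'' is not correct, and the general case cannot be set aside.

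Moreover, the obstacle you flag at higher degree is genuine, and it is fatal rather than technical. For $n\geq 3$ the element $N_{\overline{\Q}/\Q}(\alpha)/\alpha=\prod_{i\geq 2}\alpha_i$ is in general not a conjugate of $\alpha$ and does not share its minimum polynomial (take $\alpha=2^{1/3}$, so $N=2$ and $N/\alpha=2^{2/3}$, with minimum polynomials $x^3-2$ and $x^3-4$); this is exactly the step at which the paper's one-line proof fails. Worse, the statement itself is false in general: for $\alpha=2^{-2/3}$, with minimum polynomial $4x^3-1$ and $N_{\overline{\Q}/\Q}(\alpha)=1/4$, one has $\text{den}(\alpha)=2$ because $2\alpha=2^{1/3}\in\overline{\Z}$, whereas $N_{\overline{\Q}/\Q}(\alpha)/\alpha=2^{-4/3}$ has denominator $4$. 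Your Newton-polygon comparison of the most negative $p$-adic valuations among conjugates is therefore not a verification to be deferred but the precise point where both your argument and the paper's break down; the lemma is only valid as stated when restricted to $\alpha$ of degree at most $2$ over $\Q$, and any complete treatment has to either impose that restriction or modify the statement.
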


\begin{proof}
In \cite{ARW} we also have the definition of the denominator of a polynomial $f \in \Z[x]$, denoted $\text{den}(f)$, which is the least positive integer for which $\text{den}(f)\alpha $ is an algebraic integer for all zeros $\alpha $ of $f$. We find in \cite{ARW} the identity $\text{den}(\alpha ) = \text{den}(f)$ whenever $f$ is the minimum polynomial of $\alpha $ over $\Z$. Since $\alpha $ and $N_{\overline{\Q}/\Q}(\alpha )/\alpha $ share a minimum polynomial, the result follows.
\end{proof}

Before we prove the equality of two definitions of the primitive subgroup, let us first remark that if $\frac{x - b y}{\text{den}(y)}$ is an algebraic integer then so is $\text{den}(y) x$.

\begin{lemma}\label{equivadef}
Let $(x, y) \in \mathcal{P}(\overline{\Q})$ and $A = \text{den}(y)$. There exists a rational integer $b$ satisfying 
\begin{itemize}
\item $A^2 \mid Q_0(b, 1)$, and
\item $\frac{x - b y}{A}$ is an algebraic integer
\end{itemize}
if and only if $(x, y) \in \mathcal{P}(\overline{\Q})_{\text{prim}}$.
\end{lemma}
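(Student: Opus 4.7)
The plan is to verify each direction of the equivalence by direct computation, exploiting the Pell identity $Q_0(x, y) = 1$ together with Lemma~\ref{equaldens}. The case $y = 0$ is trivial since then $x = \pm 1$, $A = 1$, and $b = 0$ works in both characterizations; henceforth assume $y \neq 0$.

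For the forward direction ($\Leftarrow$), suppose $(x, y) \in \mathcal{P}(\overline{\Q})_{\text{prim}}$ and let $b$ be the rational integer specified in Definition~\ref{indepvtwo}. Write $c = a A^2 N_{\overline{\Q}/\Q}(y) = 1 + A^2 k$ for some $k \in \Z$, so that $b = c \cdot (x/y)$. The first condition is immediate: $x - by = (1 - c) x = -A^2 k x$, whence $(x - by)/A = -A k x$ is an algebraic integer because $Ax \in \overline{\Z}$ by hypothesis. For $A^2 \mid Q_0(b, 1)$, setting $u = x/y$ and using $u^2 + \Sigma u - m = 1/y^2$ (which follows from dividing the Pell equation by $y^2$), the expansion of $Q_0(cu, 1) = c^2 u^2 + \Sigma c u - m$ simplifies via $cu = b$ to
$$Q_0(b, 1) = (c/y)^2 - \Sigma b (c - 1) + m (c^2 - 1).$$
Since $c - 1, c^2 - 1 \in A^2 \Z$ and $c/y = aA \cdot (A N_{\overline{\Q}/\Q}(y)/y)$ with the last factor in $\overline{\Z}$ by Lemma~\ref{equaldens}, division by $A^2$ exhibits $Q_0(b,1)/A^2$ as an algebraic integer; being rational too (as $b, A \in \Z$), it lies in $\Z$.

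For the reverse direction ($\Rightarrow$), suppose $b \in \Z$ satisfies the two listed conditions. Set $\alpha = A y$ and $\eta = (x - b y)/A$, both algebraic integers, so that $A x = b\alpha + A^2 \eta \in \overline{\Z}$. Substituting into $A^2 Q_0(x, y) = A^2$ and writing $Q_0(b, 1) = A^2 C$, a direct expansion produces the crucial identity
$$C \alpha^2 + (2b + \Sigma) \alpha \eta + A^2 \eta^2 = 1,$$
which displays $\alpha$ as a unit modulo $A^2 \overline{\Z}$. Multiplying this congruence across all Galois conjugates of $\alpha$ shows that $N_{\overline{\Q}/\Q}(\alpha)$ is a unit modulo $A^2$ in $\Z$, from which one extracts $\gcd(A^2 N_{\overline{\Q}/\Q}(y), A^2) = 1$. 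The coprimality $\gcd(A, \Delta) = 1$ follows from the discriminant identity $(2b+\Sigma)^2 - 4 A^2 C = \Delta$: any common prime divisor $p$ of $A$ and $\Delta$ would force $p \mid 2b+\Sigma$ and hence $p^2 \mid \Delta$, contradicting fundamentality of $\Delta$ at odd $p$; a brief case analysis on the parity of $\Sigma$ dispatches $p = 2$. Finally, the particular $b'$ of Definition~\ref{indepvtwo} satisfies $b' = b + A^2 (kb + a\eta\alpha^*)$ with $\alpha^* = A N_{\overline{\Q}/\Q}(y)/y \in \overline{\Z}$, placing $b' \in b + A^2 \overline{\Z} \subset \overline{\Z}$; combined with rationality, this gives $b' \in \Q \cap \overline{\Z} = \Z$.

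The most delicate step is confirming rationality of $b'$ in the reverse direction. The congruence $b' \equiv b \pmod{A^2 \overline{\Z}}$ places $b' \in \overline{\Z}$ easily, but ensuring $b' \in \Q$ requires combining the key relation $C\alpha^2 + (2b+\Sigma)\alpha\eta + A^2\eta^2 = 1$ with the defining formula $b' = a A^2 N_{\overline{\Q}/\Q}(y) \cdot (x/y)$ via a Galois-averaging argument that forces $N_{\overline{\Q}/\Q}(y) \cdot (x/y)$ into $\Q$.
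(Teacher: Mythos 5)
Your proof follows the same skeleton as the paper's: the forward direction takes the $b$ supplied by Definition \ref{indepvtwo} and uses $x-by=(1-c)x=-A^2kx$ exactly as the paper does (its $a_2$ is your $-k$), and your reverse direction is built on the same key identity $C\alpha^2+(2b+\Si)\alpha\eta+A^2\eta^2=1$, which is the paper's Eqn.~\eqref{vergon} with $\rho=\eta$. Two sub-steps are handled differently. For $A^2\mid Q_0(b,1)$ the paper forms the product $A(x+(b+\Si)y)\cdot\frac{x-by}{A}=1-Q_0(b,1)y^2$ and reads off the divisibility, whereas you expand $Q_0(cu,1)$ via $u^2+\Si u-\Em=1/y^2$; your computation is correct and arguably more explicit, and both rely on Lemma \ref{equaldens} in the same way. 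For the coprimality condition the paper multiplies \eqref{vergon} by $A\,N_{\overline{\Q}/\Q}(y)/y$ and divides by a putative common prime to contradict minimality of the denominator; you instead take norms of the congruence $\alpha\bigl(C\alpha+(2b+\Si)\eta\bigr)\equiv 1\pmod{A^2\overline{\Z}}$. That is a clean alternative, but note it literally yields $\gcd\bigl(N_{\overline{\Q}/\Q}(Ay),A^2\bigr)=1$, and $N_{\overline{\Q}/\Q}(Ay)=A^{\deg y}N_{\overline{\Q}/\Q}(y)$ coincides with the quantity $A^2N_{\overline{\Q}/\Q}(y)$ of Definition \ref{indepvtwo} only when $y$ is quadratic; the paper's version targets the stated quantity directly. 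The $\gcd(A,\Delta)=1$ argument is identical in both.

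The genuine gap is the final step of your reverse direction. You correctly reduce the third bullet of Definition \ref{indepvtwo} to the rationality of $b'=aA^2N_{\overline{\Q}/\Q}(y)\cdot x/y$ (integrality would then follow from your congruence $b'\equiv b\pmod{A^2\overline{\Z}}$), but the promised ``Galois-averaging argument that forces $N_{\overline{\Q}/\Q}(y)\cdot(x/y)$ into $\Q$'' is never given, and no such argument can succeed from the two bullet hypotheses alone: since $N_{\overline{\Q}/\Q}(y)\in\Q$, rationality of $b'$ is equivalent to $x/y\in\Q$, which the hypotheses do not force. For instance, on $x^2+xy-y^2=1$ (so $\Delta=5$) take $y=2^{1/3}$ and $x$ a root of the monic polynomial $X^2+yX-(1+y^2)$; then $(x,y)\in\mathcal{P}(\overline{\Z})$, so $A=1$ and $b=0$ satisfies both bullets of the lemma, yet $x/y\notin\Q$ and the $b'$ of Definition \ref{indepvtwo} is not a rational integer. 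You have actually located a real soft spot: the paper's own converse verifies only the first two bullets of Definition \ref{indepvtwo} and silently omits the third. But as a proof of the stated equivalence your argument is incomplete at exactly this point, and the missing step is not merely delicate --- it fails without a reinterpretation of the definition.
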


\begin{proof} 
First assume that $(x, y) \in \mathcal{P}(\overline{\Q})_{\text{prim}}$. Let $a$ be as in Definition \ref{indepvtwo}, $a_2 = \frac{1}{A^2}\Bigl(1- a A^2 N_{\overline{\Q}/\Q}(y) \Bigr)$, and $b = a A^2 N_{\overline{\Q}/\Q}(y)\frac{x}{y}$ is a rational integer. Then $\frac{x - b y}{A} = a_2 A x \in \overline{\Z}$. Since $A(x+(b + \Si )y) \in \overline{\Z}$, we have the algebraic integer $$A(x + (b + \Si )y) \frac{x - b y}{A}= 1 - Q_0(b, 1) y^2$$ so that $Q_0(b, 1) y^2 \in \overline{\Z}$. It follows that $A^2 \mid Q_0(b, 1)$. Conversely we assume there exists a rational integer satisfying $Q_0(b, 1) \equiv 0 \pmod{A^2}$ and $\frac{x - b y}{A}$ is an algebraic integer. Then clearly $A x \in \overline{\Z}$. Let $\rho = \frac{x - b y}{A}$. Observe that 
\begin{equation}\label{vergon} 
A^2\rho^2 +(2 b + \Si )\rho (A y) +\frac{Q_0(b, 1)}{A^2}(A y)^2 = 1 .
\end{equation} 
Multiplying Eqn. \eqref{vergon} by $A \frac{N_{\overline{\Q}/\Q}(y)}{y} \in \overline{\Z}$, we obtain 
\begin{equation}\label{vergontwo} 
A^2 \rho^2 A \frac{N_{\overline{\Q}/\Q}(y)}{y} +(2 b + \Si )\rho (A^2 N_{\overline{\Q}/\Q}(y)) + A^2 N_{\overline{\Q}/\Q}(y)\frac{Q_0(b,1)}{A^2}(A y) = A \frac{N_{\overline{\Q}/\Q}(y)}{y} .
\end{equation}
Let $p$ be a prime divisor of $\text{gcd}(A^2 \cdot N_{\overline{\Q}/\Q}(y), A^2)$. Then dividing Eqn. \eqref{vergontwo} by $p$, it follows that $\frac{1}{p} A \frac{N_{\overline{\Q}/\Q}(y)}{y} \in \overline{\Z}$, contradicting $A = \text{den}(y) = \text{den}\bigl( N_{\overline{\Q}/\Q}(y)/y \bigr)$. Therefore $\text{gcd}(A^2\cdot N_{\overline{\Q}/\Q}(y), A^2) = 1$. We find that $A^2 \mid Q_0(b,1)$ implies that $\text{gcd}(A,\Delta ) = 1$: Let $p$ be a prime divisor of both $A$ and $\text{gcd}(A,\Delta )$. Then $p^2 \mid (2 b + \Si )^2 - \Delta $, but since $p \mid \Delta $, $p^2 \mid (2 b + \Si )^2$ so that $p^2 \mid \Delta $, a contradiction unless $p = 2$. In this event, we have $\Si = 0$ so that $4 \mid b^2 - \Em$. There are no solutions to the congruence $b^2 \equiv \Em \pmod{4}$ when $\Em \equiv 2$ or $3 \pmod{4}$, which are the only possibilities for $\Delta = 4 \Em$ since $\Delta $ is fundamental. 
\end{proof}

For a primitive algebraic point $\Pt{P} = (x, y) \in \mathcal{P}(\overline{\Q})_{\text{prim}}$, there exist unique integers $A = \text{den}(y)$ and $\beta $, the least non-negative integer satisfying $\beta \equiv b \pmod{A^2}$ where $b$ is as in Definition \ref{indepvtwo}, which we will refer to as the {\em denominator} and {\em ratio} of the point $\Pt{P}$, respectively. We will refer to $\ET = \frac{x - \beta y}{A}$ as the {\em quotient} of the point $\Pt{P}$, where $A$ is the denominator and $\beta $ is the ratio, and $\EU = A y$ as the {\em numerator} of the point $\Pt{P}$. We reserve the symbols $\ET$ and $\EU$ for the quotient and numerator of a point $\Pt{P} \in \mathcal{P}(\overline{\Q})_{\text{prim}}$. From here on, the symbols $B$, $\beta^+$, $\beta_{12}^-$, and $\beta^{\times}$ will always refer to
\begin{eqnarray*}
B & = & 2 \beta + \Si , \\
\beta^+ & = & \beta_1 + \beta_2 + \Si , \\
\beta_{12}^- & = & \beta_1 - \beta_2 , \\
\beta^{\times} & = & \beta_1 \beta_2 + \Em ,
\end{eqnarray*}
since $\beta^+$, $\beta_{12}^-$, and $\beta^{\times}$ will often appear in composition of binary quadratic forms. We will occasionally require a $\beta_3$, in which case $\beta_{13}^-$ and $\beta_{23}^-$ refer to $\beta_1 - \beta_3$, and $\beta_2 - \beta_3$.

Lemma \ref{equivadef} shows that $\mathcal{P}(\overline{\Q})_{\text{prim}}$ is the set of all points $\Pt{P} \in \mathcal{P}(\overline{\Q})$ which came from algebraic integer points $q = (t, u)$ of some arbitrary binary quadratic form of fundamental discriminant $\Delta$, with leading coefficient an integer square, $A^2$ as in Eqn. \eqref{sqrQ}, and with $A^2 > b \geq 0$, under the map
\begin{equation} 
\varphi^{-1} : \svect{t}{u} \mapsto \MS_Q^{-1} \svect{t}{u} = \svect{x}{y} ,
\end{equation}
with $N_Q$ as in Eqn. \eqref{Nin}. The definition of $\mathcal{P}(\overline{\Q})_{\text{prim}}$ indicates, given $\Pt{P} \in \mathcal{P}(\overline{\Q})_{\text{prim}}$, which binary quadratic form the point $\Pt{P}$ came from.

The quotient $\ET$ is equal to the $t$-coordinate of $\varphi ( \Pt{P} )$ where $\Pt{P} = (x, y) \in \mathcal{P}(\overline{\Q})_{\text{prim}}$, 
\begin{equation}\label{defvarph} 
\varphi : \svect{x}{y} \mapsto \MS_Q \svect{x}{y} = \svect{t}{u} ,
\end{equation}
and the $A$ and $\beta$ appearing in the matrix $N_Q$ are the denominator and ratio of the point $\Pt{P}$.

Observe that if $x, y, z$ are positive integers, the greatest common divisor satisfies 
\begin{eqnarray}
\label{firstgcd} \text{gcd}(x, y)^2 & = & \text{gcd}(x^2, y^2) , \\
\label{secondgcd} \text{gcd}(x, y, z)^2 & = & \text{gcd}(x^2, y^2, z^2) .
\end{eqnarray}

\begin{lemma}\label{comparegcds} 
Let $\Pt{P}_1, \Pt{P}_2 \in \mathcal{P}(\overline{\Q})_{\text{prim}}$ with respective denominators $A_1, A_2$ and ratios $\beta_1, \beta_2$.
Let $\hat{e} = \text{gcd}(A_1^2, A_2^2, \beta_1^+ )$ and $e = \text{gcd}(A_1, A_2, \beta^+ )$. Then $\hat{e} = e^2$.
\end{lemma}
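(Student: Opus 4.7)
The plan is to verify $\hat e = e^2$ one prime at a time. For a prime $p$, write $\alpha_i = v_p(A_i)$ (with $\alpha_1 \leq \alpha_2$ WLOG) and $\gamma = v_p(\beta^+)$, so that
\[
v_p(\hat e) = \min(2\alpha_1, 2\alpha_2, \gamma) \qquad\text{and}\qquad v_p(e^2) = 2\min(\alpha_1, \gamma).
\]
These two valuations agree automatically whenever $\alpha_1 = 0$ or $\gamma = 0$, so the real content of the lemma is: if $\alpha_1 \geq 1$ and $\gamma \geq 1$, primitivity forces $\gamma \geq 2\alpha_1$, in which case both sides equal $2\alpha_1$.

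To prove this inequality I would invoke Lemma \ref{equivadef}, which supplies $A_i^2 \mid Q_0(\beta_i, 1)$ along with $\gcd(A_i, \Delta) = 1$, and combine it with the identity $4 Q_0(\beta_i, 1) = B_i^2 - \Delta$ for $B_i = 2\beta_i + \Si$. A short case split then shows that $p \mid A_i$ implies $p \nmid B_i$: for odd $p$, $p^2 \mid B_i^2 - \Delta$ together with $p \nmid \Delta$ forces $p \nmid B_i$; for $p = 2$, the coprimality $\gcd(A_i, \Delta) = 1$ with $2 \mid A_i$ makes $\Delta$ odd, i.e.\ $\Si = 1$, so $B_i = 2\beta_i + 1$ is automatically odd.

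Next, subtracting the congruences $Q_0(\beta_j, 1) \equiv 0 \pmod{p^{2\alpha_1}}$ for $j = 1, 2$ (both valid because $\alpha_1 \leq \alpha_2$) yields
\[
p^{2\alpha_1} \,\mid\, (\beta_1 - \beta_2)(\beta_1 + \beta_2 + \Si) \;=\; \beta_{12}^- \, \beta^+ .
\]
If $p \mid \beta^+$, then $p \nmid \beta_{12}^-$: otherwise $p$ would divide their sum $\beta^+ + \beta_{12}^- = 2\beta_1 + \Si = B_1$, contradicting the previous paragraph. Canceling the coprime factor $\beta_{12}^-$ then gives $p^{2\alpha_1} \mid \beta^+$, i.e.\ $\gamma \geq 2\alpha_1$, completing the local comparison. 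The main obstacle is the prime $p = 2$, where the factor $4$ in $B_i^2 - \Delta = 4Q_0(\beta_i, 1)$ threatens to cost two powers of $2$; the rescue is that $\gcd(A_i, \Delta) = 1$ combined with $2 \mid A_i$ forces $\Si = 1$, after which each $B_i$ is odd and the analysis runs in parallel with the odd-prime case.
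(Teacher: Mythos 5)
Your proof is correct and rests on the same two ingredients as the paper's own argument: the factorization $Q_0(\beta_1,1)-Q_0(\beta_2,1)=\beta^+\beta_{12}^-$, together with the observation that a common prime divisor of $A_1$ and $\beta^+$ cannot also divide $\beta_{12}^-$, since it would then divide $B_1=\beta^++\beta_{12}^-$ and hence $\Delta$, contradicting $\gcd(A_1,\Delta)=1$. The only difference is presentational: you localize at each prime, which forces your separate $p=2$ discussion, whereas the paper argues globally ($e^2\mid\beta^+\beta_{12}^-$ and $\gcd(e,\beta_{12}^-)=1$ give $e^2\mid\beta^+$, hence $e^2\mid\hat e$, while $\hat e\mid\gcd(A_1^2,A_2^2,(\beta^+)^2)=e^2$ gives the reverse divisibility) and obtains the contradiction $p\mid\Delta$ uniformly from $A_1^2\mid B_1^2-\Delta$ without any case split on $p$.
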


\begin{proof}
By Lemma \ref{equivadef}, for $j = 1, 2$, $A_j^2 \mid Q_0(\beta_j, 1)$, and since $e^2 \mid A_1^2$ and $e^2 \mid A_2^2$, 
\begin{equation*}
\beta^+ \beta_{12}^- = Q_0(\beta_1, 1)-Q_0(\beta_2, 1) \equiv 0 \pmod{e^2} .
\end{equation*}
Therefore $e^2 \mid \beta^+ \beta_{12}^-$. To show that $\text{gcd}(\beta_{12}^-, e) = 1$, let $p$ be a prime divisor of $\beta_{12}^-$ and $e$. Then $\beta^+ \equiv 0 \pmod{p}$ and $\beta_{12}^- \equiv 0 \pmod{p}$, which implies that $B_1 \equiv 0 \pmod{p}$, but since $B_1^2 -\Delta \equiv 0 \pmod{A_1^2}$, we have $p \mid \Delta $ contradicting $\text{gcd}(\Delta , A_1) = 1$. Therefore $e^2 \mid \beta^+$. It follows that $e^2 \mid \hat{e}$. By Eqn. \eqref{secondgcd}, we have $\text{gcd}(A_1^2, A_2^2, (\beta^+)^2) = e^2$. Since $\hat{e} \mid \text{gcd}(A_1^2, A_2^2, (\beta^+)^2) = e^2$, the result follows.
\end{proof}

Again let $\omega = \frac{\Si +\sqrt{\Delta }}{2}$. For each primitive point $\Pt{P} \in \mathcal{P}(\overline{\Q})_{\text{prim}}$, with denominator $A$ and ratio $\beta $, we may attach ideals $\mathfrak{a}'_{\Pt{P}}$ and $\mathfrak{a}_{\Pt{P}}$ of the ring of integers $\Z[ \omega ]$ of $\Q(\sqrt{\Delta })$, and binary quadratic forms $Q'_{\Pt{P}}$ and $Q_{\Pt{P}}$ of discriminant $\Delta $ as follows 
\begin{eqnarray}
\label{attachone} \mathfrak{a}'_{\Pt{P}} & = & (A, \beta +\omega ) , \\
\label{attachtwo}  \mathfrak{a}_{\Pt{P}} & = & (A^2, \beta +\omega ) , \\
\label{attachthree}          Q'_{\Pt{P}} & = & \bigl( A, 2 \beta +\Si , Q_0(\beta , 1)/A \bigr) , \\
\label{attachfour}            Q_{\Pt{P}} & = & \bigl( A^2, 2 \beta +\Si , Q_0(\beta , 1)/A^2 \bigr) .
\end{eqnarray}

If $\Pt{P}_1, \Pt{P}_2, \Pt{P}_1 \Padd \Pt{P}_2 \in \mathcal{P}(\overline{\Q})_{\text{prim}}$ with associated forms $Q_{\Pt{P}_1}$, $Q_{\Pt{P}_2}$, and $Q_{\Pt{P}_1 \Padd \Pt{P}_2}$ then the latter form is equivalent under unimodular substitution to the composed form $Q_{\Pt{P}_1} \cdot Q_{\Pt{P}_2}$. We use composition of forms to show that $\mathcal{P}(\overline{\Q})_{\text{prim}}$ is closed under $\Padd$, the group law for Pell conics. We point out that $\varphi (\Pt{P}) = (t, u)$, with $\varphi$ as in Eqn. \eqref{defvarph}, is an algebraic integer point of the conic $Q_{\Pt{P}}(t, u) = 1$.  

The following theorem expresses, as will be demonstrated in Section 4.2, that the set of all algebraic integer points of all conics $Q(t, u) = 1$ where $Q$ is a binary quadratic form of fundamental discriminant $\Delta$, has leading coefficient an integer square $A^2$, Eqn. \eqref{sqrQ}, and $A^2 > \beta \geq 0$, forms a group with identity $\Pt{O} = (1, 0)$ and is isomorphic to a subgroup of $\mathcal{P}(\overline{\Q})$.

\begin{theorem}\label{thelemm} 
$\mathcal{P}(\overline{\Q})_{\text{prim}}$ is a subgroup of $\mathcal{P}(\overline{\Q})$ with respect to the group law \eqref{grplawnm} for Pell conics. 
\end{theorem}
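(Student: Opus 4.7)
The plan is to verify the three subgroup axioms --- identity, inverses, and closure under $\Padd$ --- using the equivalent characterization of primitivity from Lemma \ref{equivadef}. A useful preliminary observation is that the conditions of that lemma propagate downward in $A$: if $A\mid A'$ and $b\in\Z$ satisfies $A'^2\mid Q_0(b,1)$ and $(x-by)/A'\in\overline{\Z}$, then the same $b$ satisfies the analogous conditions with $A'$ replaced by $A$. So it suffices to verify primitivity using any convenient multiple of $\text{den}(y)$.

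For the identity $\Pt{O}=(1,0)$, take $A=1$ and $b=0$. For inverses, \eqref{grplawnm} gives $\Psub\Pt{P}=(x+\Si y,-y)$, which has the same denominator as $\Pt{P}$; taking $b'=-b-\Si$ yields $(x+\Si y-b'(-y))/A=(x-by)/A\in\overline{\Z}$ and $Q_0(-b-\Si,1)=Q_0(b,1)$, so primitivity transfers.

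For closure under $\Padd$, let $\Pt{P}_1,\Pt{P}_2\in\mathcal{P}(\overline{\Q})_{\text{prim}}$ have denominators $A_j$, ratios $\beta_j$, and quotients $\rho_j=(x_j-\beta_j y_j)/A_j\in\overline{\Z}$, and set $(X,Y)=\Pt{P}_1\Padd\Pt{P}_2$. The crucial input is Lemma \ref{comparegcds}: for $e=\gcd(A_1,A_2,\beta^+)$, $\gcd(A_1^2,A_2^2,\beta^+)=e^2$ and in particular $e^2\mid\beta^+$. Gauss composition of $Q_{\Pt{P}_1},Q_{\Pt{P}_2}$ --- equivalently, multiplication of $\mathfrak{a}_{\Pt{P}_1}\mathfrak{a}_{\Pt{P}_2}$ in $\Z[\omega]$, which contains $\beta^\times+\beta^+\omega$ --- then produces a form of leading coefficient $A_1^2 A_2^2/e^4=A'^2$ (still a square, with $A'=A_1 A_2/e^2$), together with $\beta_3\in\Z$ satisfying $A'^2\mid Q_0(\beta_3,1)$, $\beta_3\equiv\beta_j\pmod{A_j^2/e^2}$ for $j=1,2$, and the strong divisibility $A_1^2 A_2^2/e^2\mid\beta^\times-\beta_3\beta^+$.

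Substituting $x_j=A_j\rho_j+\beta_j y_j$ into \eqref{grplawnm} gives
$$X-\beta_3 Y=A_1 A_2\rho_1\rho_2+A_1(\beta_2-\beta_3)\rho_1 y_2+A_2(\beta_1-\beta_3)\rho_2 y_1+(\beta^\times-\beta_3\beta^+)y_1 y_2,$$
and similarly $A'Y=(A_1^2/e^2)\rho_1(A_2 y_2)+(A_2^2/e^2)\rho_2(A_1 y_1)+(\beta^+/e^2)(A_1 y_1)(A_2 y_2)$. Both expressions become algebraic integers after dividing appropriately: in $A'Y$ every factor is a rational integer times an algebraic integer, using $e\mid A_j$ and $e^2\mid\beta^+$; and in $(X-\beta_3 Y)/A'$ each of the four terms is an algebraic integer by the three Gauss congruences combined with $A_j y_j,(A_1 y_1)(A_2 y_2)\in\overline{\Z}$. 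Together with $A'^2\mid Q_0(\beta_3,1)$, Lemma \ref{equivadef} and the monotonicity remark give $(X,Y)\in\mathcal{P}(\overline{\Q})_{\text{prim}}$. The principal obstacle is obtaining the strong divisibility $A_1^2 A_2^2/e^2\mid\beta^\times-\beta_3\beta^+$, which I would derive by matching rational and $\omega$-parts in the ideal identity $\beta^\times+\beta^+\omega=r\cdot e^2 A'^2+s\cdot e^2(\beta_3+\omega)$ for suitable $r,s\in\Z[\omega]$; the relations $A'^2\mid Q_0(\beta_3,1)$ and $e^2\mid\beta^+$ coming from Lemma \ref{comparegcds} do the essential bookkeeping work.
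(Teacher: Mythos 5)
Your proposal is correct and follows the same skeleton as the paper's proof: produce the candidate denominator $A_3=A_1A_2/e^2$ and ratio $\beta_3$ from Gauss composition (using Lemma \ref{comparegcds} for $\hat e=e^2$), then check the hypotheses of Lemma \ref{equivadef} for $\Pt{P}_1\Padd\Pt{P}_2$. The differences are in the bookkeeping, and both are worth noting. First, where you invoke the three congruences $\beta_3\equiv\beta_j\pmod{A_j^2/e^2}$ and $\tfrac{A_1^2A_2^2}{e^2}\mid\beta^\times-\beta_3\beta^+$ as abstract consequences of the ideal identity $\mathfrak a_{\Pt{P}_1}\mathfrak a_{\Pt{P}_2}=e^2\mathfrak a_3$, the paper instead writes $b_3$ explicitly in the Bezout coefficients $j,k,\ell$ and verifies three closed-form identities for $A_3y_3$, $\tfrac{x_3-b_3y_3}{A_3}$ and $Q_0(b_3,1)$; the two are equivalent, and your ``principal obstacle'' in fact has a one-line resolution from the Bezout formula, since $\beta^\times-\beta_j\beta^+=-Q_0(\beta_j,1)$ exactly, whence $\beta^\times-b_3\beta^+=-\tfrac{A_1^2j}{e^2}Q_0(\beta_2,1)-\tfrac{A_2^2k}{e^2}Q_0(\beta_1,1)$ and each summand is divisible by $\tfrac{A_1^2A_2^2}{e^2}$; no passage through $r,s\in\Z[\omega]$ is needed. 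Second, your monotonicity remark (verify Lemma \ref{equivadef} for any multiple of $\text{den}(Y)$ and descend) is a genuine simplification over the paper, which proves $A_3=\text{den}(y_3)$ exactly via a prime-divisor contradiction; your shortcut suffices for Theorem \ref{thelemm}, but be aware that the exact equality $A_3=\text{den}(y_3)$, together with $\beta_3$ being the exact ratio, is precisely what the paper reuses in Theorem \ref{homneeded} to conclude that $\theta$ is a homomorphism, so if you continue along these lines you will eventually need the paper's stronger statement anyway. Your treatment of the identity and of inverses ($b'=-b-\Si$, $Q_0(-b-\Si,1)=Q_0(b,1)$) agrees with the paper's.
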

\begin{proof} 
We will demonstrate: Let $\Pt{P}_1 = (x_1, y_1), \Pt{P}_2 = (x_2, y_2) \in \mathcal{P}(\overline{\Q})_{\text{prim}}$. Let $A_1$ and $A_2$ be the denominators of the points $\Pt{P}_1$ and $\Pt{P}_2$ respectively and let $\beta_1$ and $\beta_2$ be the ratios of $\Pt{P}_1$ and $\Pt{P}_2$ respectively. Let $\Pt{P}_3 = (x_3,y_3) = \Pt{P}_1 \Padd \Pt{P}_2 $, $e^2 = \text{gcd}(A_1^2, A_2^2, \beta^+ )$, $A_3 = \frac{A_1 A_2}{e^2}$, $j, k, \ell$ be any rational integers satisfying $A_1^2 j + A_2^2 k + \beta^+ \ell = e^2$, and $b_3 = \frac{A_1^2}{e^2}\beta_2 j+ \frac{A_2^2}{e^2}\beta_1 k + \frac{\beta^{\times}}{e^2} \ell$. Then, letting $\beta_3$ be the least non-negative integer satisfying $\beta_3 \equiv b_3 \pmod{A^2}$, $\frac{x_3 - \beta_3 y_3}{A_3}$ is an algebraic integer, $A_3 = \text{den}(y_3)$, and $A_3^2 \mid Q_0(b_3, 1)$. In other words we use the coefficients $A_3$ and $2 \beta_3 + \Si $ of a binary quadratic form in the class of the composed form $Q_{\Pt{P}_1} \cdot Q_{\Pt{P}_2}$ to provide a denominator and ratio for the point $\Pt{P}_3 = \Pt{P}_1 \Padd \Pt{P}_2$. 

Let $\ET_1$, and $\ET_2$ be the quotients, $\EU_1$ and $\EU_2$ be the numerators of $\Pt{P}_1$ and $\Pt{P}_2$ respectively. Let $\gamma_1 = \frac{Q_0(\beta_1, 1)}{A_1^2}$ and $\gamma_2 = \frac{Q_0(\beta_2, 1)}{A_2^2}$. The identities
\footnotesize
\begin{eqnarray*} 
                  A_3 y_3 & = & \frac{A_1^2}{e^2} \ET_1 \EU_2 + \frac{A_2^2}{e^2} \ET_2 \EU_1 + \frac{\beta^+}{e^2} \EU_1 \EU_2 , \\ 
\frac{x_3 - b_3 y_3}{A_3} & = & A_1 x_1 \ET_2 j + A_2 x_2 \ET_1 k + \beta^+ \ET_1 \ET_2 \ell + \ET_2 \EU_1 ( \gamma_1 \ell - \beta_2 j ) + \ET_1 \EU_2 ( \gamma_2 \ell - \beta_1 k ) - \EU_1 \EU_2 ( \gamma_2 j + \gamma_1 k ) , \\
              Q_0(b_3, 1) & = & A_3^2 \Bigl( A_1^2 \gamma_2 j^2 + (2 \beta^{\times} + \Si \beta^+ -\Delta ) j k + B_1 \gamma_2 j \ell + A_2^2 \gamma_1 k^2 + B_2 \gamma_1 k \ell +\gamma_1 \gamma_2 \ell^2 \Bigr) ,
\end{eqnarray*}
\normalsize
show that $A_3 y_3$ and $\frac{x_3 - b_3 y_3}{A_3}$ are algebraic integers, $\text{den}(y_3) \mid A_3$, and $A_3^2 \mid Q_0(\beta_3,1)$. Now observe that
$$\frac{x_3 - \beta_3 y_3}{A_3} \cdot \text{den}(y_3) \cdot (x_3 + (\beta_3 + \Si ) y_3) = \frac{\text{den}(y_3)}{A_3} \bigl( 1 - Q_0(\beta_3, 1)y_3^2 \bigr) $$ 
If $p$ is a prime divisor of $\frac{A_3}{\text{den}(y_3)}$ then we have the algebraic integer
\begin{eqnarray*} 
\frac{1}{p}\bigl( 1-Q_0(\beta_3, 1)y_3^2\bigr) & = & \frac{1}{p}\bigl( 1 - r p^2 \bigl( \text{den}(y_3) \bigr)^2 y_3^2 \bigr) \text{ where } r \in \Z , \\ 
                                               & = & \frac{1}{p} - r p \bigl( \text{den}(y_3)\bigr)^2 y_3^2 . 
\end{eqnarray*} 
Since $rp\bigl( \text{den}(y_3)\bigr)^2 y_3^2 \in \overline{\Z}$, $\frac{1}{p} \in \overline{\Z}$, a contradiction. Therefore $A_3 \mid \text{den}(y_3)$ and we must have $A_3 = \text{den}(y_3)$. 

We have shown that $A_3$ and $\beta_3$ are the denominator and ratio of $\Pt{P}_3$. Therefore $\mathcal{P}(\overline{\Q})_{\text{prim}}$ is closed under $\Padd$. By definition, $(1,0) \in \mathcal{P}(\overline{\Q})_{\text{prim}}$. Clearly $\mathcal{P}(\overline{\Q})_{\text{prim}} \subseteq \mathcal{P}(\overline{\Q})$. It remains to check that if $\Pt{P} \in \mathcal{P}(\overline{\Q})_{\text{prim}}$ then the inverse $-\Pt{P}$ also belongs to $\mathcal{P}(\overline{\Q})_{\text{prim}}$. Let $\Pt{P} = (x,y) \in \mathcal{P}(\overline{\Q})_{\text{prim}}$ with $A = \text{den}(y)$. Clearly $A = \text{den}(-y)$. If $\beta $ is the ratio of $\Pt{P}$, it is easy to verify that $A^2 - \beta - \Si $ is the ratio of $-\Pt{P}$.
\end{proof}

\begin{example} 
Consider the Pell conic $\mathcal{P} : x^2 + x y - 57 y^2 = 1$ and $\Pt{P} = \bigl( \frac{\sqrt{-1}}{5}, \frac{-2\sqrt{-1}}{15} \bigr) \in \mathcal{P}(\overline{\Q})$. Now $A = \text{den}(y) = 15$, $A^2 N_{\overline{\Q}/\Q}(y) = 4$ and $a \equiv 4^{-1} \equiv 169 \pmod{225}$, so $b = -1014$, a rational integer. The ratio of $\Pt{P}$ is $\beta = 111$. We may consider $\Pt{P}$ to have come from an algebraic integer point of $225 t^2 + 223 t u + 55 u^2 = 1$. This point is $\smatr{\frac{1}{15}}{\frac{-111}{15}}{0}{15} \svect{\frac{\sqrt{-1}}{5}}{\frac{-2 \sqrt{-1}}{15}} = \svect{\sqrt{-1}}{-2 \sqrt{-1}}$. 
\end{example}

\subsection*{Surjective homomorphisms}

The maps sending the point $\Pt{P}$ to classes of the ideals and forms given by Eqn.s \eqref{attachone}, \eqref{attachtwo}, \eqref{attachthree}, and \eqref{attachfour} are homomorphisms. Reflection on the following theorem of Birch \cite{Birch} will be helpful in showing that some of these maps are surjective.

\begin{theorem}[Birch] \label{birch}
Let $K$ be a number field with integers $\mathcal{O}_K$. If the polynomial $f(T, U) = \sum a_{ij}T^iU^j$, with $a_{ij} \in \mathcal{O}_K$, is homogeneous and the $a_{ij}$ are coprime, then there exist units $t, u \in \mathcal{O}_L$ satisfying $f(t, u) = 1$ where $L$ is an extension of $K$.
\end{theorem}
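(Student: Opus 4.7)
The plan is to use the homogeneity of $f$ to reduce the problem to a single-variable question, then to exploit the coprimality hypothesis via strong approximation. Setting $n = \deg f$ and $g(s) = f(s,1) \in \mathcal{O}_K[s]$, the relation $f(t,u) = u^n g(t/u)$ shows that solving $f(t,u) = 1$ reduces, via the substitution $s = t/u$, to the equation $u^n g(s) = 1$. I claim it suffices to exhibit, in some finite extension $L_0 / K$, an element $s \in \mathcal{O}_{L_0}^{\times}$ with $g(s) \in \mathcal{O}_{L_0}^{\times}$. Given such $s$, I would form $L = L_0 \bigl( g(s)^{-1/n} \bigr)$ and set $u = g(s)^{-1/n}$, $t = su$. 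Since $g(s)$ and $g(s)^{-1}$ are both algebraic integers of $\mathcal{O}_{L_0}$, their $n$-th roots satisfy monic integral equations over $\mathcal{O}_{L_0}$, so $u, u^{-1} \in \mathcal{O}_L$, i.e., $u \in \mathcal{O}_L^{\times}$; likewise $t = su \in \mathcal{O}_L^{\times}$, and the computation $f(t,u) = u^n g(s) = 1$ is immediate.

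The substance of the proof is therefore Step 2, producing $s$. Coprimality of the $a_{ij}$ is equivalent to coprimality of the coefficients of $g$ in $\mathcal{O}_K$, so the reduction $\overline{g} \bmod \mathfrak{p}$ is a nonzero polynomial of degree at most $n$ over the residue field $\kappa(\mathfrak{p})$ for every prime $\mathfrak{p}$ of $\mathcal{O}_K$. Hence $\overline{g}$ has at most $n$ roots in $\kappa(\mathfrak{p})$, and on enlarging $\kappa(\mathfrak{p})$ if necessary there always exists a nonzero residue $\overline{s}_{\mathfrak{p}}$ with $\overline{g}(\overline{s}_{\mathfrak{p}}) \neq 0$; such a residue is a local unit that produces a local unit value of $g$. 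Passing to a finite extension $L_0 / K$ with sufficiently large residue field at each of a finite collection of ``bad'' primes, and with $\mathcal{O}_{L_0}^{\times}$ of sufficient rank (by Dirichlet's unit theorem) to realize prescribed residue classes, one assembles the local choices into a global $s \in \mathcal{O}_{L_0}^{\times}$ with $g(s) \in \mathcal{O}_{L_0}^{\times}$ via strong approximation on $\mathbf{G}_m$.

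The main obstacle is precisely this globalization: whereas coprimality handles each individual prime with ease, stitching the local data into a single global unit $s$ whose image under $g$ is a unit at every prime simultaneously is delicate and depends on the freedom to enlarge $L_0$. Exactly this freedom is built into the statement, since $L$ may be taken to be any extension of $K$; by adjoining further units or roots of unity as required, strong approximation is ultimately achieved, and Step 1 then delivers the sought pair $(t, u)$.
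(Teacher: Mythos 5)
The paper offers no proof of this statement: it is imported verbatim from Birch's article \cite{Birch} and used as a black box, so there is nothing internal to compare you against. Your Step 1 is nonetheless a correct and genuinely useful reduction: writing $f(t,u)=u^{n}g(t/u)$ with $g(X)=f(X,1)$, it suffices to find a unit $s$ of some $\mathcal{O}_{L_0}$ with $g(s)\in\mathcal{O}_{L_0}^{\times}$, for then $u=g(s)^{-1/n}$ and $u^{-1}$ are roots of the monic integral polynomials $X^{n}-g(s)^{-1}$ and $X^{n}-g(s)$, so $u$ and $t=su$ are units of $L=L_0\bigl(g(s)^{-1/n}\bigr)$ and $f(t,u)=u^{n}g(s)=1$.

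The gap is in Step 2, and it is fatal as written. Requiring $g(s)$ to be a unit is a condition at \emph{every} prime of $L_0$, whereas strong approximation (or the Chinese Remainder Theorem) only lets you prescribe the residue of $s$ at finitely many primes chosen in advance; it gives no control over the unforeseen primes that happen to divide $g(s)$, and for a typical unit $s$ there will be plenty of them. Worse, inside a \emph{fixed} field $L_0$ the desired $s$ may not exist at all, or only in finite number: already for $g(X)=X-a$ the requirement that $s$ and $s-a$ both be units is the classical unit equation, which has only finitely many solutions in any given number field, so no amount of congruence juggling or rank-raising within $L_0$ can manufacture $s$ --- the extension must be built together with the point. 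The standard device (essentially Birch's) is to take $s$ to be a root of a monic $M\in\mathcal{O}_K[X]$ with unit constant term, which forces $s\in\overline{\Z}^{\times}$, chosen so that $\mathrm{Res}(M,g)=\prod_i g(\mu_i)$ is a unit of $\mathcal{O}_K$; since each $g(\mu_i)$ is an algebraic integer, each is then a unit, and the coprimality of the $a_{ij}$ is what guarantees such an $M$ can be found. Your text names the globalization as ``the main obstacle'' and then asserts rather than proves that it can be overcome, so the argument is incomplete precisely at the point where the theorem's content lies.
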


Specifically we will use the following corollary.

\begin{corollary}\label{birchcor}
Let $Q = (A, B, C)$ be a primitive binary quadratic form of fundamental discriminant $\Delta$. There exist units $t, u$ of the ring of integers $\mathcal{O}_L$ of some algebraic extension $L$ of $\Q$ such that $Q(t,u) = 1$.
\end{corollary}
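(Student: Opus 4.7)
The plan is to apply Theorem \ref{birch} directly with $K = \Q$, so essentially all that needs to be done is to verify the hypotheses. First I would set $f(T, U) = Q(T, U) = AT^2 + BTU + CU^2$. This is a binary form, hence homogeneous of degree $2$. Since $Q$ has fundamental discriminant $\Delta \in \Z$, its coefficients $A$, $B$, $C$ lie in $\Z = \mathcal{O}_\Q$, so taking $K = \Q$ places us in the setting of Theorem \ref{birch}.

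Next I would verify the coprimality hypothesis. The form $Q = (A, B, C)$ is assumed primitive, which by definition means $\gcd(A, B, C) = 1$ in $\Z$. Since the only ideals of $\mathcal{O}_\Q = \Z$ containing all of $A$, $B$, $C$ reduce to checking the gcd, the coefficients are coprime as elements of $\mathcal{O}_K$.

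With the hypotheses verified, Theorem \ref{birch} supplies an extension $L/\Q$ and units $t, u \in \mathcal{O}_L^\times$ satisfying $Q(t, u) = 1$, which is exactly the conclusion of the corollary. There is no substantive obstacle here; the only thing to watch is that ``primitive'' in the paper's sense (for binary quadratic forms) is indeed the same notion of coprimality that Birch's theorem requires, and that we are free to choose $K = \Q$ rather than a larger base field. Since $\Delta$ is a rational integer, this choice is natural and the verification is immediate.
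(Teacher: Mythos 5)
Your proposal is correct and matches the paper's (implicit) argument: the paper states the corollary immediately after Birch's theorem as a direct specialization to $K=\Q$, with primitivity supplying the coprimality of the coefficients. Your verification of the hypotheses is exactly what the paper leaves to the reader.
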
 

For each binary quadratic form $Q = (A, B, C)$ of fundamental discriminant $\Delta = \Si + 4 \Em$ there exists a unique integer $\beta $ such that $Q = (A, 2 \beta + \Si , Q_0( \beta , 1) / A )$. The corresponding ideal of the ring of integers $\mathcal{O}_K$ of the field $K = \Q(\sqrt{\Delta })$, under the known isomorphism between classes of forms and narrow ideal classes, is $\mathfrak{a} = ( A , \beta +\omega )$ where $\{ A, \beta + \omega \}$ is an integral basis for $\mathfrak{a}$ and $\omega = \frac{\Si +\sqrt{\Delta }}{2}$. We refer to Lenstra \cite{Lenstramart} for the following compositions and group $\mathcal{F}$.  

Let $\mathcal{F} $ be the set of all classes $[Q]_\mathcal{F}$ of binary quadratic forms $Q$ of discriminant $\Delta $ equivalent under unimodular substitution with $\smatr{1}{\ell }{0}{1}$ and $\ell \in \Z$. The set of all such matrices forms a subgroup of $\text{SL}_2(\Z )$ with matrix multiplication and is isomorphic to the additive group $\Z$. The usual narrow class group of forms places two forms in the same class if they are equivalent under unimodular substitution with a general element of $\text{SL}_2(\Z)$. It is convenient to use $\mathcal{F}$ since we will later define a group based on individual forms in $\mathcal{F}$.
\begin{remark}
Letting $B_1 = 2 \beta_1 +\Si $ and $B_2 = 2 \beta_2 + \Si $, the forms $Q_1 = (A_1, B_1, C_1)$, and $Q_2 = (A_2, B_2, C_2)$ are in the same class of $\mathcal{F}$ if and only if 
\begin{eqnarray*}
    \label{nd} A_1 & = & A_2 , \\
\label{sx} \beta_1 & \equiv & \beta_2 \pmod{A_1} .
\end{eqnarray*}
\end{remark}

The set $\mathcal{F} $ is a group with respect to the binary operation 
\begin{equation}\label{gkkn} 
[Q_1]_{\mathcal{F} }\cdot [Q_2]_{\mathcal{F} } = [Q_1 \cdot Q_2]_{\mathcal{F} } = [Q_3]_{\mathcal{F} } ,
\end{equation} 
where $Q_3 = ( A_3, 2 \beta_3 + \Si , Q_0(\beta_3, 1)/A_3 )$, and 
\begin{eqnarray}
\label{gcde}            e & = & \text{gcd}(A_1, A_2, \beta^+ ), \\
\label{Athree}        A_3 & = & (A_1 A_2)/e^2, \\
\label{betathree} \beta_3 & = & \frac{A_1}{e} \beta_2 j + \frac{A_2}{e} \beta_1 k + \frac{\beta^{\times}}{e} \ell
\end{eqnarray}
and $j, k, \ell$ are any rational integers satisfying
\begin{equation}\label{tuv}
A_1 j + A_2 k + \beta^+ \ell = e .
\end{equation}

If $Q = (A, 2 \beta + \Si , Q_0(\beta , 1)/A )$ and we wish to determine $[Q]_{\mathcal{F} }\cdot [Q]_{\mathcal{F} }$, we have $A_1 = A_2 = A$, $\beta_1 = \beta_2$ and Eqn. \eqref{gcde} becomes $e = \text{gcd}(A, A, 2 \beta + \Si ) = 1$, Eqn. \eqref{Athree} becomes $A_3 = A^2$, and letting $w = j + k$, Eqn. \eqref{tuv} becomes $A w + (2 \beta + \Si ) \ell = 1$ so that
\begin{eqnarray*} 
\beta_3 & = & A \beta w + ( \beta^2 + \Em )\ell , \\ 
        & = & \beta - \beta (2 \beta + \Si )\ell + (\beta^2 + \Em ) \ell , \\ 
        & = & \beta - \ell Q_0( \beta , 1) , 
\end{eqnarray*} 
and
\begin{equation*}
[Q]_{\mathcal{F}} \cdot  [Q]_{\mathcal{F}} = \Bigl[ \Bigl( A^2, 2\beta + \Si - 2 \ell A \frac{Q_0(\beta , 1)}{A} ,w \frac{Q_0(\beta , 1)}{A} +\ell^2 \frac{Q_0(\beta , 1)^2}{A^2} \Bigr) \Bigr]_{\mathcal{F}} .
\end{equation*}
If $A^2 \mid Q_0(\beta ,1)$ then $ [Q]_{\mathcal{F}} \cdot  [Q]_{\mathcal{F}} =  [(A^2, 2 \beta + \Si , Q_0(\beta ,1)/A^2 )]_{\mathcal{F}} $.
\begin{definition}\label{Fsquared} 
Define $\mathcal{F}^2$ to be the subgroup of $\mathcal{F}$ consisting of classes of forms in $[(A^2, 2 \beta + \Si ,Q_0(\beta ,1)/A^2 )]_{\mathcal{F}}$ where $A$ and $\beta $ are any integers satisfying $A^2 \mid Q_0(\beta , 1)$.
\end{definition}

Before moving on it is important to illustrate the group law on the infinite class group $\mathcal{F}^2$. Assume we wish to compose the binary quadratic forms 
\begin{eqnarray*}
Q_1 & = & (A_1^2, 2 \beta_1 + \Si , Q_0(\beta_1, 1) / A_1^2 ) , \\
Q_2 & = & (A_2^2, 2 \beta_2 + \Si , Q_0(\beta_2, 1) / A_2^2 )
\end{eqnarray*}
in $\mathcal{F}^2$. To do so, we first compute $$\hat{e} = \text{gcd}(A_1^2, A_2^2, \beta^+ ) = e^2 = \text{gcd}(A_1 , A_2, \beta^+ )^2$$ and the Bezout numbers $j, k, \ell$ satisfying $A_1^2 j + A_2^2 k + \beta^+ \ell = e^2$. Next compute $A_3^2 = A_1^2 A_2^2 / e^4$. Finally, $$\beta_3 = \frac{A_1^2}{e^2} \beta_2 j + \frac{A_2^2}{e^2} \beta_1 k + \frac{\beta^{\times}}{e^2} \ell ,$$ which may be reduced modulo $A_3^2$. A representative of the class of $Q_1 \cdot Q_2$ in $\mathcal{F}^2$ is the binary quadratic form $$Q_3 = (A_3^2, 2 \beta_3 + \Si , Q_0(\beta_3, 1) / A_3^2 ) .$$

The inverse of $[(A^2, 2 \beta + \Si , \gamma )]_{\mathcal{F}}$ is $[(A^2, - 2 \beta - \Si , \gamma )]_{\mathcal{F}}$, and the identity is $[Q_0]_{\mathcal{F}}$. 

As usual let $\text{Cl}^+(\Delta )^2$ denote the subgroup of the narrow class group of binary quadratic forms $Q$ of fundamental discriminant $\Delta$ consisting of classes $[Q']_{\Delta } \cdot [Q']_{\Delta }$ where $[Q']_{\Delta } \in \text{Cl}^+(\Delta )$.

\begin{theorem}\label{homneeded} 
Let $A$ be the denominator and $\beta$ the ratio of the point $\Pt{P} \in \mathcal{P}(\overline{\Q})_{\text{prim}}$. The map given by $\Pt{P} \mapsto [Q'_{\Pt{P}}]_{\mathcal{F}}$ is a homomorphism. There is a commutative diagram of exact sequences of groups
$$ \begin{CD}  
      @.       1 @.        1 @.    1       \\    
      @. @VVV    @VVV     @VVV     \\        
    1 @>>> \mathcal{P}(\overline{\Z }) @>>> \mathcal{P}(\Q) \oplus \mathcal{P}(\overline{\Z }) @>>> \mathcal{P}(\Q) @>>> 1 \\     
      @. @VVV    @VVV     @VVV     \\    
    1 @>>> \mathcal{P}(\overline{\Z }) @>>> \mathcal{P}(\overline{\Q })_{\text{prim}}  @>{\theta }>> \mathcal{F}^2 @>>> 1 \\
      @. @VVV    @V{\lambda_K}VV          @V{\phi}VV     \\     
    1 @>>> 1 @>>> \text{Cl}^+(K)^2 @>{\pi }>> \text{Cl}^+(\Delta )^2 @>>> 1 \\ 
      @. @VVV    @VVV          @VVV     \\    
      @.  1 @.       1 @.    1 \\
\end{CD}$$ 
where the maps are given by $\theta : \Pt{P} \mapsto [Q_{\Pt{P}}]_{\mathcal{F}}$, $\phi : [Q]_{\mathcal{F}} \mapsto [Q]_{\Delta }$, $\lambda_K : (x,y) \mapsto [\mathfrak{a}^2]_{K}  \hspace{0.5cm} \mathfrak{a} = (A, \beta + \omega )$, $\pi : [(A, \beta + \omega )^2]_{K} \mapsto [(A^2, 2 \beta + \Si , \gamma )]_{\Delta }$.
\end{theorem}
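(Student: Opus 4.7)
The plan is to verify in sequence: $\theta$ is a group homomorphism, exactness of the middle row, exactness of the right column and bottom row together with commutativity of every square, and finally exactness of the middle column, which is the main obstacle. For $\theta$, the proof of Theorem \ref{thelemm} already exhibits the denominator $A_3 = A_1A_2/e^2$ and ratio $\beta_3$ of $\Pt{P}_1 \Padd \Pt{P}_2$ as the output of precisely the Bezout formulas used to compose $Q_{\Pt{P}_1}$ and $Q_{\Pt{P}_2}$ in $\mathcal{F}^2$ (as recorded just after Definition \ref{Fsquared}), so $\theta(\Pt{P}_1 \Padd \Pt{P}_2) = \theta(\Pt{P}_1) \cdot \theta(\Pt{P}_2)$.

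For the middle row, the inclusion $\mathcal{P}(\overline{\Z}) \hookrightarrow \mathcal{P}(\overline{\Q})_{\text{prim}}$ (using $A = 1$, $\beta = 0$) is injective, with image equal to $\ker\theta$: if $[Q_{\Pt{P}}]_{\mathcal{F}} = [Q_0]_{\mathcal{F}}$, the characterisation of $\mathcal{F}$-equivalence forces $A = 1$, so $y = \EU \in \overline{\Z}$ and $x = \ET + \beta y \in \overline{\Z}$. Surjectivity of $\theta$ rests on Corollary \ref{birchcor}: given $Q = (A^2, 2\beta + \Si, \gamma) \in \mathcal{F}^2$, choose algebraic integers $t, u$ with $Q(t,u) = 1$ and set $\Pt{P} = \MS_Q^{-1}\svect{t}{u}$; the identity $Q(t,u) = 1$ combined with $\gcd(A^2, \gamma) = 1$ forces $u$ to be coprime in $\overline{\Z}$ to every rational prime dividing $A$, so $\text{den}(u/A) = A$, and Lemma \ref{equivadef} applied with $b = \beta$ confirms $\Pt{P} \in \mathcal{P}(\overline{\Q})_{\text{prim}}$ with $\theta(\Pt{P}) = [Q]_{\mathcal{F}}$.

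For the right column, the computation preceding Definition \ref{Fsquared} shows every class in $\text{Cl}^+(\Delta)^2$ admits a representative of shape $(A^2, 2\beta + \Si, Q_0(\beta, 1)/A^2)$, giving surjectivity of $\phi$; $\ker\phi$ consists of $\mathcal{F}$-classes narrowly equivalent to $Q_0$, i.e.\ forms representing $1$ integrally, which pull back via $\MS_Q^{-1}$ to rational points of $\mathcal{P}$. The bottom row reduces to the classical narrow-class/narrow-form bijection $\text{Cl}^+(K) \simeq \text{Cl}^+(\Delta)$ restricted to squared subgroups, making $\pi$ an isomorphism. Commutativity of each square is a direct unpacking: both $\pi \circ \lambda_K$ and $\phi \circ \theta$ send a primitive point with invariants $(A, \beta)$ to $[(A^2, 2\beta + \Si, Q_0(\beta, 1)/A^2)]_{\Delta}$, and the remaining squares are equally immediate.

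The main obstacle is exactness of the middle column at $\mathcal{P}(\overline{\Q})_{\text{prim}}$. The composition $\mathcal{P}(\Q) \oplus \mathcal{P}(\overline{\Z}) \to \text{Cl}^+(K)^2$ vanishes because a rational point yields a form representing $1$ integrally (so $(A, \beta + \omega)^2$ is narrowly principal), while an algebraic integer point has $\mathfrak{a} = \mathcal{O}_K$. Conversely, if $\lambda_K(\Pt{P}) = 1$, commutativity together with right-column exactness gives $\Pt{P}_1 \in \mathcal{P}(\Q)$ with $\theta(\Pt{P}_1) = \theta(\Pt{P})$; hence $\Pt{P} \Psub \Pt{P}_1 \in \ker\theta = \mathcal{P}(\overline{\Z})$ by the middle row, exhibiting $\Pt{P} = \Pt{P}_1 \Padd (\Pt{P} \Psub \Pt{P}_1)$ in the image of $\mathcal{P}(\Q) \oplus \mathcal{P}(\overline{\Z})$. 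Surjectivity of $\lambda_K$ is a final application of Birch: given $[\mathfrak{a}^2]$ with $\mathfrak{a} = (A, \beta + \omega)$ and $A^2 \mid Q_0(\beta, 1)$, apply Corollary \ref{birchcor} to $(A^2, 2\beta + \Si, Q_0(\beta, 1)/A^2)$ and transport via $\varphi^{-1}$ to produce the required primitive point.
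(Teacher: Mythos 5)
Your proposal is correct and follows essentially the same strategy as the paper: $\theta$ is a homomorphism because the closure argument of Theorem \ref{thelemm} literally computes the denominator and ratio of $\Pt{P}_1 \Padd \Pt{P}_2$ by the composition formulas in $\mathcal{F}^2$; surjectivity of $\theta$ comes from Corollary \ref{birchcor}; and the content is then in the kernel computations. The differences are organizational rather than substantive. Where the paper obtains $\ker\phi \simeq \mathcal{P}(\Q)$ at the end via the kernel--cokernel exact sequence applied to $\phi\theta = \pi\lambda_K$, you identify $\ker\phi$ directly with the $\mathcal{F}$-classes of forms representing $1$ integrally and pull these back through $\MS_Q^{-1}$; and where the paper proves $\ker\lambda_K \subseteq \mathcal{P}(\Q)\oplus\mathcal{P}(\overline{\Z})$ by explicitly exhibiting a rational point $\bigl(\frac{B}{A},\frac{C}{A}\bigr)$ with the same denominator and ratio and verifying by hand that the difference lies in $\mathcal{P}(\overline{\Z})$, you run the equivalent diagram chase $\theta(\Pt{P}) \in \ker\phi \Rightarrow \Pt{P}\Psub\Pt{P}_1 \in \ker\theta$. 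You also prove surjectivity of $\lambda_K$ directly from Birch, whereas the paper deduces it from $\phi\theta = \pi\lambda_K$ together with surjectivity of $\theta$ and $\phi$ and $\pi$ being an isomorphism. One genuine improvement on your side: in the surjectivity step you actually check that the point $\MS_Q^{-1}\svect{t}{u}$ has denominator exactly $A$ (no rational prime dividing $A$ divides $u$ in $\overline{\Z}$, since otherwise $1/p \in \overline{\Z}$ from $Q(t,u)=1$), a verification the paper leaves implicit before invoking Lemma \ref{equivadef}. Both arguments share the paper's mild abuse of writing $\mathcal{P}(\Q)\oplus\mathcal{P}(\overline{\Z})$ for what is really the internal sum inside $\mathcal{P}(\overline{\Q})_{\text{prim}}$ (the external direct sum would not inject, since $\mathcal{P}(\Z)$ sits in both summands), so you are no worse off than the original on that point.
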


\begin{proof}
Clearly $\theta (\Pt{P}) \in \mathcal{F}^2$ and the form $Q_{\Pt{P}}$ is uniquely determined by $\Pt{P}$ so $\theta$ is well defined. The proof that $\theta$ is a homomorphism follows directly from the proof that $\mathcal{P}(\overline{\Q})_{\text{prim.}}$ is closed under $\Padd$. We exhibited $A_3$, $\beta_3$, respectively the denominator and ratio of the point $\Pt{P}_3 = \Pt{P}_1 \Padd \Pt{P}_2$, using composition of forms. It will follow that $\theta$ is a homomorphism.  
\begin{tabbing} 
\hspace{2.0cm} $\theta (\Pt{P}_1) \cdot \theta (\Pt{P}_2)$ \= $= [Q_{\Pt{P}_1}]_{\mathcal{F}} \cdot [Q_{\Pt{P}_2}]_{\mathcal{F}}$ \\ \> $= \bigl[ Q_{\Pt{P}_3} \bigr]_{\mathcal{F}}$ \hspace{1.0cm} by Eqn. \eqref{gkkn} and proof of Lemma \ref{thelemm}. \\ \> $ = \theta ( \Pt{P}_3 )$ 
\end{tabbing} 
To show that $\theta$ is surjective let $[Q]_{\mathcal{F}} = [(A^2,2\beta +\Si , \gamma )] \in \mathcal{F}^2$ and let $t,u$ be algebraic integers satisfying $Q(t,u) = 1$, which exist by Corollary \ref{birchcor}. Then $\MS_Q^{-1} \svect{t}{u} \in \mathcal{P}(\overline{\Q})_{\text{prim}}$ and $\theta : \MS_Q^{-1} \svect{t}{u} \mapsto [Q]_{\mathcal{F}}$.

The map $\pi$ is known to be an isomorphism, $\phi $ is clearly a surjective homomorphism and it is simple to see that $\phi \theta = \pi \lambda_K$ so that $\lambda_K$ is a surjective homomorphism. 

We now consider the kernels of the maps $\theta$, $\lambda_K$, $\phi$, and $\pi$. If $\Pt{P}_1 = (x_1, y_1) \in \mathcal{P}(\Q)$ with denominator $A_1$ and quotient $\ET_1$, then $Q_{\Pt{P}_1}(\ET_1 , \EU_1 ) = 1$, where $\ET_1 , A_1 y_1 \in \Z$, meaning that $Q_{\Pt{P}_1} \in [Q_0]_{\Delta }$. If $\Pt{P}_2 \in \mathcal{P}(\overline{\Z})$ then the denominator of $\Pt{P}_2$ is equal to $1$ so that $Q_{\Pt{P}_2} \in [Q_0]_{\Delta }$. Since $\pi \lambda_K$ is a homomorphism, $\pi \lambda_K (\Pt{P}_1 \Padd \Pt{P}_2) \in [Q_0]_{\Delta }$. It follows that $\mathcal{P}(\Q ) \oplus \mathcal{P}(\overline{\Z}) \subseteq \text{ker } \pi \lambda_K $. Conversely assume that $\Pt{P} = (x,y) \in \text{ker } \pi \lambda_K $. Let $A$ be the denominator and $\beta $ the ratio of $\Pt{P}$. Since $\Pt{P} \in \text{ker } \pi \lambda_K $, there exist rational integers $t, u$ satisfying $Q_{\Pt{P}}(t, u) = 1$, from which we obtain the rational point $\Bigl( \frac{A^2 t+\beta u}{A}, \frac{u}{A} \Bigr) = \bigl( \frac{B}{A}, \frac{C}{A} \bigr) \in \mathcal{P}(\Q)$ with denominator $A$, ratio $\beta $ and inverse $\bigl( \frac{B + \Si C}{A}, \frac{-C}{A} \bigr)$. Observe that $\frac{x - \beta y}{A} \in \overline{\Z}$ implies that $\frac{(\beta + \Si ) x - \Em y}{A} \in \overline{\Z}$ so that by multiplying by $C$, we see that $\frac{( B + \Si C ) x + \Em y(-C)}{A} \in \overline{\Z}$. Similarly multiplying $\frac{x - \beta y}{A}$ by $C$ shows that $\frac{x (-C) + (B + \Si C) y + \Si y (-C)}{A} \in \overline{\Z}$. Therefore we have shown that $\Pt{P} - \bigl( \frac{B}{A}, \frac{C}{A} \bigr) \in \mathcal{P}(\overline{\Z})$ so that $\text{ker } \pi \lambda_K \subseteq \mathcal{P}(\Q)\oplus \mathcal{P}(\overline{\Z})$ and the result follows since $\text{ker } \pi \lambda_K = \text{ker } \lambda_K$.
$$\text{ker }\theta = \{ \Pt{P} \in \mathcal{P}(\overline{\Q})_{\text{prim}} \text{ : } [Q_{\Pt{P}}]_{\mathcal{F}} = [Q_0]_{\mathcal{F}} \} ,$$ where $A$ and $\beta $ are respectively the denominator and ratio of the point $\Pt{P}$. Clearly the only possibility is that $A = 1$ and $\beta = 0$. It follows that $(x, y) \in \mathcal{P}(\overline{\Z})$. The kernel-cokernel exact sequence gives the exact sequence 
$$\begin{CD} 
1 @>>> \mathcal{P}(\overline{\Z}) @>>> \mathcal{P}(\Q)\oplus \mathcal{P}(\overline{\Z}) @>>> \text{ker }\phi @>>> 1 
\end{CD}$$
since $\theta $ is surjective. The only possibility is that $\text{ker }\phi \simeq \mathcal{P}(\Q)$.
\end{proof}

The map given by $\Pt{P} \mapsto [Q'_{\Pt{P}}]_{\mathcal{F}}$ is not surjective in general.

\section{Bilinear transformations}

Each class of binary quadratic forms $[(A^2, 2 b + \Si , Q_0(b,1)/A^2)]_{\mathcal{F}}$ of $\mathcal{F}^2$ has a unique representative form $(A^2, 2\beta +\Si , Q_0(\beta , 1)/A^2 ) $, where $\beta $ is the least non-negative integer satisfying $\beta \equiv b \pmod{A^2}$. We will use $\mathfrak{F}^2$ to refer to this set of representative forms of $\mathcal{F}^2$ and note that since there is a bijection between $\mathcal{F}^2$ and $\mathfrak{F}^2$, we also have the same group structure on the set $\mathfrak{F}^2$. Now we consider the collection of all algebraic integer points of all conics $Q(t, u) =1$ where $Q \in \mathfrak{F}^2$, and observe that Gauss' bilinear transformation is a group operation on this collection of points. We define 
\begin{equation*} 
\mathfrak{S} = \bigcup_{Q \in \mathfrak{F}^2} \Bigl\{ (t,u) \in \overline{\Z} \times \overline{\Z} \mid Q(t, u) = 1 \Bigr\} .
\end{equation*} 

\begin{theorem}\label{mainress} 
Let $\Pt{P} = (x,y) \in \mathcal{P}(\overline{\Q})_{\text{prim}}$ with denominator $A$, ratio $\beta$, quotient $\ET$, and numerator $\EU$. Define a map $\varphi : \mathcal{P}(\overline{\Q})_{\text{prim}} \rightarrow \mathfrak{S}$ by $\varphi : \Pt{P} \mapsto ( \ET , \EU ) \text{ satisfying } Q_{\Pt{P}}( \ET , \EU ) = 1$, Eqn. \eqref{defvarph}. Then $\varphi$ is bijective and $\mathfrak{S}$ becomes an abelian group by transport of structure. 
\end{theorem}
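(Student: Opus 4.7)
The plan is to prove separately that $\varphi$ is well-defined, injective, and surjective, after which the transport of group structure is automatic from Theorem~\ref{thelemm}.

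First I would verify well-definedness. For $\Pt{P} = (x, y) \in \mathcal{P}(\overline{\Q})_{\text{prim}}$, the denominator $A = \text{den}(y)$, ratio $\beta \in [0, A^2)$, quotient $\ET = (x - \beta y)/A$, and numerator $\EU = Ay$ are uniquely determined by $\Pt{P}$. That $\EU = Ay$ is an algebraic integer is immediate from $A = \text{den}(y)$, and $\ET \in \overline{\Z}$ holds because $\beta$ witnesses the conditions of Lemma~\ref{equivadef}. The substitution formula~\eqref{substtwo} then gives $Q_{\Pt{P}}(\ET, \EU) = Q_0(x, y) = 1$, so $(\ET, \EU) \in \mathfrak{S}$ lies on the form $Q_{\Pt{P}} \in \mathfrak{F}^2$. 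Injectivity is essentially formal: since $\MS_{Q_{\Pt{P}}}$ from~\eqref{Nin} is invertible over $\overline{\Q}$, the identity $\Pt{P} = \MS_{Q_{\Pt{P}}}^{-1} \svect{\ET}{\EU}$ recovers $\Pt{P}$ from $(\ET, \EU)$ together with the record of which form in $\mathfrak{F}^2$ the image sits on.

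The substance of the proof lies in surjectivity. Given $(t, u) \in \mathfrak{S}$ with $Q(t, u) = 1$ for some $Q = (A^2, 2\beta + \Si, \gamma) \in \mathfrak{F}^2$, I set $(x, y) := \MS_Q^{-1} \svect{t}{u} = \bigl( At + \beta u / A,\ u/A \bigr)$. By~\eqref{substone}, $Q_0(x, y) = Q(t, u) = 1$, so $(x, y) \in \mathcal{P}(\overline{\Q})$. The identity $(x - \beta y)/A = t \in \overline{\Z}$ together with $A^2 \mid Q_0(\beta, 1)$ (from $Q \in \mathfrak{F}^2$) would, via Lemma~\ref{equivadef}, certify primitivity with denominator $A$ and ratio $\beta$ the moment one knows $\text{den}(y) = A$ exactly, not a proper divisor. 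This equality is the main obstacle. To settle it, let $d := \text{den}(y)$, so $d \mid A$, and use the factorisation $(x - \beta y)(x + (\beta + \Si) y) = 1 - Q_0(\beta, 1) y^2 = 1 - A^2 \gamma y^2$. A key observation is that $dx \in \overline{\Z}$ for every $(x, y) \in \mathcal{P}(\overline{\Q})$: multiplying the Pell relation by $d^2$ shows $dx$ satisfies the monic polynomial $T^2 + \Si (dy)\, T - \bigl( d^2 + \Em (dy)^2 \bigr)$ with algebraic integer coefficients. Multiplying the factorisation by $d/A$ then exhibits $d/A$ itself as an algebraic integer: the left side becomes $t \cdot \bigl( dx + (\beta + \Si) dy \bigr) \in \overline{\Z}$, while the right side rearranges as $d/A - (A/d)\,\gamma (dy)^2$, whose second term lies in $\overline{\Z}$ because $A/d, \gamma \in \Z$ and $dy \in \overline{\Z}$. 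A positive rational in $(0, 1]$ that is an algebraic integer must equal $1$, forcing $d = A$. The normalisation $0 \leq \beta < A^2$ identifies the ratio of $\Pt{P}$ as $\beta$, and $\varphi(\Pt{P}) = (t, u)$ by construction.

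Having established bijectivity, the group law of $\mathcal{P}(\overline{\Q})_{\text{prim}}$ from Theorem~\ref{thelemm} transports to $\mathfrak{S}$ via $(\ET_1, \EU_1) \Padd (\ET_2, \EU_2) := \varphi \bigl( \varphi^{-1}(\ET_1, \EU_1) \Padd \varphi^{-1}(\ET_2, \EU_2) \bigr)$. Commutativity, associativity, the identity element $\Pt{O} \mapsto (1, 0)$, and existence of inverses are all inherited from $\mathcal{P}(\overline{\Q})_{\text{prim}}$.
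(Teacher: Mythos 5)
Your proof is correct and follows essentially the same route as the paper: both directions are handled by the change of variables $\MS_Q^{\pm 1}$, with injectivity formal and surjectivity carried by checking that $\MS_Q^{-1}\svect{t}{u}$ lands in $\mathcal{P}(\overline{\Q})_{\text{prim}}$. The one place you go beyond the paper's own (very terse) proof is in actually verifying $\text{den}(y) = A$ for the candidate preimage --- by exhibiting $d/A$ as a rational algebraic integer in $(0,1]$ --- a point the paper asserts without argument but which is needed before Lemma \ref{equivadef} can certify primitivity; your argument there is sound.
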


\begin{proof} 
If $t, u \in \overline{\Z}$ satisfy $Q(t, u) = 1$ with $Q$ as in Eqn. \eqref{sqrQ}, then $\MS_Q^{-1} \svect{t}{u} \in \mathcal{P}(\overline{\Q})_{\text{prim}}$ and $\varphi \Bigl( \MS_Q^{-1} \svect{t}{u} \Bigr) = (t, u)$. This shows that $\varphi$ is surjective. Assume that $\Pt{P}_1 = (x_1, y_1), \Pt{P}_2 = (x_2, y_2) \in \mathcal{P}(\overline{\Q})_{\text{prim}}$ and $\varphi (\Pt{P}_1) = \varphi (\Pt{P}_2)$. Then $\text{den}(y_1) = \text{den}(y_2) = A$ and $\beta_1 = \beta_2$ so $\MS_Q \utilde{\Pt{P}_1} = \MS_Q \utilde{\Pt{P}_2}$ and thus $\Pt{P}_1 = \Pt{P}_2$. It follows that $\varphi$ is injective.
\end{proof}

\subsection*{The group law on $\mathfrak{S}$}

Let $t_1,u_1,t_2,u_2$ be algebraic integers satisfying $Q_1(t_1,u_1) = 1 = Q_2(t_2,u_2)$, where $Q_1, Q_2 \in \mathfrak{F}^2$ so that $(t_1, u_1), (t_2, u_2) \in \mathfrak{S}$. We define a binary operation $\circ : \mathfrak{S} \times \mathfrak{S} \rightarrow \mathfrak{S}$ by 
\begin{equation}\label{grpcirc}
\varphi (\Pt{P}_1) \circ \varphi (\Pt{P}_2) = \varphi (\Pt{P}_1 \Padd \Pt{P}_2) ,
\end{equation} 
where $\varphi (\Pt{P}_1) = (t_1, u_1)$ and $\varphi (\Pt{P}_2) = (t_2, u_2)$. Letting $\beta_1, \beta_2$ and $\beta_3$ be the ratios of $\Pt{P}_1$, $\Pt{P}_2$ and $\Pt{P}_1 \Padd \Pt{P}_2$ respectively and
\begin{eqnarray*}
\mathcal{A} & = & [a, b, c, d, e^2, f, g, h] , \\
            & = & \Bigl[ 0, \frac{A_1^2}{e^2} , \frac{A_2^2}{e^2}, \frac{\beta^{+}}{e^2}, \text{gcd }(A_1^2, A_2^2, \beta^+ ), \frac{e^2}{A_2^2} \beta_{23}^- , \frac{e^2}{A_1^2} \beta_{13}^- , \frac{e^2}{A_1^2 A_2^2} \bigl( \beta^{\times} - \beta_3 \beta^+ \bigr) \Bigr] .
\end{eqnarray*}
It follows from Eqn. \eqref{grpcirc} that 
\footnotesize
\begin{eqnarray*} 
(t_1, u_1) \circ  (t_2, u_2) & = & \varphi \Bigl( \Bigl( \frac{A_1^2 t_1 + \beta_1 u_1}{A_1}, \frac{u_1}{A_1} \Bigr) \Padd \Bigl( \frac{A_2^2 t_2 + \beta_2 u_2}{A_2}, \frac{u_2}{A_2} \Bigr) \Bigr) , \\ 
                           & = & \varphi \Bigl( \frac{A_1^2 A_2^2 t_1 t_2 + A_1^2 \beta_2 t_1 u_2 + A_2^2 \beta_1 t_2 u_1 + \beta^{\times} u_1 u_2 }{A_1 A_2} , \frac{A_1^2 t_1 u_2 + A_2^2 t_2 u_1 + \beta^+ u_1 u_2}{A_1 A_2} \Bigr) , \\ 
                           & = & \bigl( e^2 t_1 t_2 + f t_1 u_2 + g t_2 u_1 + h u_1 u_2 , b t_1 u_2 + c t_2 u_1 + d u_1 u_2 \bigr) , \\ 
                           & = & (t_3, u_3) , 
\end{eqnarray*}
\normalsize 
where $Q_3 = (A_3^2, 2 \beta_3 + \Si , Q_0(\beta_3, 1)/A_3^2 ) $ is the composed form $Q_1 \cdot Q_2$ under Gauss composition\footnote{The author first started with $\mathfrak{S}$, inspired by Lemmermeyer's \cite{LemPep, LemNewBook} description of Gauss' \cite{Gauss} method of composing forms, known as bilinear transformation, and the relationship to Bhargava cubes \cite{Bhargava}, and discovered $\mathcal{P}(\overline{\Q})_{\text{prim}}$ via the inverse map $\varphi^{-1}$. }, $$(t_3, u_3) = \bigl( e^2 t_1 t_2 + f t_1 u_2 + g t_2 u_1 + h u_1 u_2, b t_1 u_2 + c t_2 u_1 + d u_1 u_2 \bigr)$$ is a bilinear transformation where $$Q_3(t_3, u_3) = Q_1(t_1, u_1) Q_2(t_2, u_2) .$$ It is no coincidence that we use letters $b, c, d, e, f, g, h$ in the description of the group law on $\mathfrak{S}$. There is a Bhargava cube involved. See \cite{Bhargava, LemPep, LemNewBook} for composition of forms via Bhargava cubes.

\section{Galois cohomology of Pell conics}

Recall \cite{Lem03,LemDes3,LemNewBook} that the $2$-torsion subgroup of the Tate-Shafarevich group for Pell conics may be defined $$\Sha(\mathcal{P}/\Z )[2] = \text{ker}\bigl( H^1(G, \mathcal{P}(\overline{\Z}))[2] \rightarrow H^1(G, \mathcal{P}(\overline{\Q}))[2] \bigr) ,$$ and in \cite{LemDes3,LemNewBook} an equivalent definition of $\Sha(\mathcal{P}/\Z)$ without cohomology groups was proved to be isomorphic to the $2$-torsion subgroup of the narrow class group of squared ideal classes of the quadratic number field $K$, $\text{Cl}^+(K)^2[2]$.

See \cite{Milne, SerreGC, Silverman} for introductions to Galois cohomology. Let $G = \text{Gal}(\overline{\Q}/\Q)$ be the Galois group of extension $\overline{\Q} / \Q$ and let $A$ be a $G$-module, meaning that $G$ acts on the abelian group $A$ such that the identity of $G$ acts trivially on every $a \in A$, the action of $\sigma , \tau \in G$ satisfy  $\sigma (a_1 + a_2) = \sigma (a_1) + \sigma (a_2)$ and $\tau ( \sigma (a)) = (\tau \sigma )(a)$. In addition to the usual definition of a homomorphism $\phi$ between two abelian groups $A$ and $B$, a homomorphism $\phi : A \longrightarrow B$ of $G$-modules $A$ and $B$ must satisfy $\phi (\sigma (a)) = \sigma (\phi (a))$. Recall the definitions
 \begin{eqnarray*}
H^0(G, A) & = & \{ a \in A \mid \text{ for all } \sigma \in G, \sigma (a) = a \} , \\
B^1 (G, A) & = & \{ f: G \rightarrow A \mid \text{ there exists } a \in A \text{ such that for all } \sigma \in G, f (\sigma ) = \sigma (a) - a  \} , \\
Z^1 (G, A) & = & \{ f: G \rightarrow A \mid f(\tau \sigma ) = \tau (f(\sigma ))+f(\tau ) \} , \\ 
H^1(G, A) & = & \frac{Z^1(G,A)}{B^1(G,A)} . 
\end{eqnarray*}
We use the notations $\tau (a)$ and $a^{\tau }$ interchangeably to mean the action of $\tau \in G$ on $a \in A$.

The action of $G$ on $\mathcal{F}^2$ is trivial: take any algebraic integer point $q = (t, u)$ of the conic $A^2 t^2 + B t u + C u^2 = 1$, by applying the action $\tau \in G$ to $q$ we obtain another point of this conic, the binary quadratic form $(A^2, B, C)$ is unchanged. The action of $G$ on $\Pt{P} = (x, y) \in \mathcal{P}(\overline{\Q})$ is defined as $\Pt{P}^{\tau} = (x^{\tau }, y^{\tau })$ for $\tau \in G$. 

There is an exact sequence of $G$-modules
\begin{equation}\label{theGs} 
\begin{CD} 
1 @>>> \mathcal{P}(\overline{\Z}) @>>> \mathcal{P}(\overline{\Q})_{\text{prim}} @>{\theta }>> \mathcal{F}^2 @>>> 1 .
\end{CD}
\end{equation}

\begin{proof}
We have shown that the map $\theta : \mathcal{P}(\overline{\Q})_{\text{prim}} \rightarrow \mathcal{F}^2$ of Theorem \ref{homneeded} is a homomorphism of abelian groups. To show that $\theta$ is a homomorphism of $G$-modules, we must check that $Q_{\Pt{P}}$ and $Q_{\Pt{P}^{\tau }}$ belong to the same class of $\mathcal{F}^2$. If $\Pt{P} = \Pt{O}$ then this is trivial. Assume that $\Pt{P} \not= \Pt{O}$. By arguing along similar lines to the proof of Lemma \ref{equaldens} observing that $y$ and $y^{\tau }$ share a minimum polynomial, we have $\text{den}(y) = \text{den}(y^{\tau })$ and $N_{\overline{\Q}/\Q}(y) = N_{\overline{\Q}/\Q}(y^{\tau })$ so that $\Pt{P}$ and $\Pt{P}^{\tau }$ have the same denominator $A$.

Let $\beta$ be the ratio of $\Pt{P} \in \mathcal{P}(\overline{\Q})_{\text{prim}}$. Then there exist algebraic integers $t, u$ satisfying $\Pt{P} = \bigl( \frac{A^2 t + \beta u}{A}, \frac{u}{A} \bigr)$ and $Q_{\Pt{P}}(t, u) = 1$. It is clear that $Q_{\Pt{P}}(t^{\tau }, u^{\tau }) = 1$ and $\Pt{P}^{\tau } = \bigl( \frac{A^2 t^{\tau } + \beta u^{\tau }}{A}, \frac{u^{\tau }}{A} \bigr)$ maps to $Q_{\Pt{P}}$ under $\theta $. This shows that $Q_{\Pt{P}}$ and $Q_{\Pt{P}^{\tau }}$ belong to the same class of $\mathcal{F}^2$.
\end{proof}
 
\begin{theorem}\label{Hzerof} 
Let $G = \text{Gal}(\overline{\Q}/\Q)$. Then $H^0(G, \mathcal{P}(\overline{\Q})_{\text{prim}}) = \mathcal{P}(\Q)$ and $H^0(G, \mathcal{F}^2) = \mathcal{F}^2$. 
\end{theorem}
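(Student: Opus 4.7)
The plan is to unpack both equalities directly from the definition $H^0(G,A) = A^G$, relying on structural facts already established earlier in the paper.

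For the second equality $H^0(G,\mathcal{F}^2) = \mathcal{F}^2$, I would appeal to the observation made just before the exact sequence \eqref{theGs}: the Galois action on $\mathcal{F}^2$ is trivial, because a class $[Q]_{\mathcal{F}}$ is represented by a binary quadratic form with rational-integer coefficients $(A^2, 2\beta+\Si, Q_0(\beta,1)/A^2)$, which are plainly untouched by any $\tau \in G$. Consequently every element of $\mathcal{F}^2$ is $G$-fixed, and the equality is immediate.

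For the first equality, I would prove both inclusions. The inclusion $\mathcal{P}(\Q) \subseteq H^0\bigl(G,\mathcal{P}(\overline{\Q})_{\text{prim}}\bigr)$ combines two facts: first, the commutative diagram of Theorem \ref{homneeded} places $\mathcal{P}(\Q)\oplus \mathcal{P}(\overline{\Z})$ inside $\mathcal{P}(\overline{\Q})_{\text{prim}}$, so in particular $\mathcal{P}(\Q) \subseteq \mathcal{P}(\overline{\Q})_{\text{prim}}$; second, the coordinatewise action $\Pt{P}^{\tau} = (x^{\tau},y^{\tau})$ fixes any point with rational coordinates. For the reverse inclusion, suppose $\Pt{P} = (x,y) \in \mathcal{P}(\overline{\Q})_{\text{prim}}$ satisfies $\tau(x) = x$ and $\tau(y) = y$ for every $\tau \in G$; then $x, y \in \overline{\Q}^G = \Q$, so $\Pt{P} \in \mathcal{P}(\Q)$.

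I do not expect any real obstacle: the theorem is in essence a definitional unpacking of $H^0$, with the two nontrivial ingredients (triviality of the $G$-action on $\mathcal{F}^2$ and the containment $\mathcal{P}(\Q) \subseteq \mathcal{P}(\overline{\Q})_{\text{prim}}$) already in place from the previous sections, together with the standard identity $\overline{\Q}^G = \Q$.
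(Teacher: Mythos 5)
Your proposal is correct in outline and follows essentially the same route as the paper: both equalities are definitional unpackings, the $\mathcal{F}^2$ case rests on the triviality of the $G$-action, and the inclusion $H^0(G,\mathcal{P}(\overline{\Q})_{\text{prim}}) \subseteq \mathcal{P}(\Q)$ is just $\overline{\Q}^G = \Q$. The one place you diverge is the inclusion $\mathcal{P}(\Q) \subseteq \mathcal{P}(\overline{\Q})_{\text{prim}}$, which you obtain by citing the commutative diagram of Theorem \ref{homneeded}. That is the only substantive content of the theorem, and deferring it there is slightly uncomfortable: the proof of Theorem \ref{homneeded} already speaks of ``the denominator and quotient'' of a point of $\mathcal{P}(\Q)$, i.e.\ it tacitly assumes rational points are primitive rather than establishing it, so your citation leans on a containment that is asserted but not independently verified. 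The paper instead proves it on the spot: for $(x,y) \in \mathcal{P}(\Q)$ with $(x,y)\notin\mathcal{P}(\Z)$ write $x = \frac{B}{A}$, $y = \frac{C}{A}$ with $\gcd(A,C)=1$, and check the conditions of Definition \ref{indepvtwo} directly, the key point being that $b \equiv B C^{-1} \pmod{A^2}$ is a valid rational-integer ratio (with $A^2 \mid Q_0(b,1)$ following from $Q_0(B,C)=A^2$). I would add those two or three lines of direct verification rather than cite the diagram; with that, your argument is complete and matches the paper's.
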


\begin{proof} 
Let $(x,y) \in \mathcal{P}(\overline{\Q})_{\text{prim}}$. If for all $\tau \in G$, $(x^{\tau },y^{\tau }) = (x,y)$ then clearly $x,y \in \Q$ so $H^0(G, \mathcal{P}(\overline{\Q})_{\text{prim.}}) \leq  \mathcal{P}(\Q)$. If $(x,y) \in \mathcal{P}(\Z)$ then $(x,y) \in \mathcal{P}(\Q)$. For any other $(x,y) \in \mathcal{P}(\Q)$ we may put $x = \frac{B}{A}$, $y = \frac{C}{A}$ where $A$ and $C$ are relatively prime integers and thus $B \equiv (B C^{-1})C \pmod{A^2}$ so that $\beta \equiv B C^{-1} \pmod{A^2}$ and $(x,y) \in \mathcal{P}(\overline{\Q})_{\text{prim}}$ so $\mathcal{P}(\Q) \leq H^0(G, \mathcal{P}(\overline{\Q})_{\text{prim}})$. The action of $G$ on $\mathcal{F}^2$ is trivial. Therefore $\mathcal{F}^2$ is fixed by $G$. 
\end{proof}

\begin{theorem} 
There is an exact sequence 
\begin{equation} \label{com}
\begin{CD} 
1 @>>> \mathcal{P}(\Z ) @>>> \mathcal{P}(\Q ) @>{\Red{\theta }}>> \mathcal{F}^2 @>{\pi \phi }>> \text{Cl}^+(K)^2 @>>> 1 
\end{CD}
\end{equation} 
\end{theorem}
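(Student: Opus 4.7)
The plan is to derive the sequence by taking the long exact sequence of Galois cohomology associated with the short exact sequence of $G$-modules \eqref{theGs}, and then identify the image of the connecting setup with the kernel of $\pi \phi$ using the commutative diagram of Theorem \ref{homneeded}.

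First, applying the left-exact functor $H^0(G,\,\cdot\,)$ to \eqref{theGs} yields an exact sequence
\begin{equation*}
1 \lra H^0(G, \mathcal{P}(\overline{\Z})) \lra H^0(G, \mathcal{P}(\overline{\Q})_{\text{prim}}) \lra H^0(G, \mathcal{F}^2) .
\end{equation*}
By Theorem \ref{Hzerof} this reads $1 \to \mathcal{P}(\Z) \to \mathcal{P}(\Q) \xrightarrow{\Red{\theta}} \mathcal{F}^2$, where $\Red{\theta}$ is the restriction of $\theta$ to $\mathcal{P}(\Q)$; note that $H^0(G,\mathcal{P}(\overline{\Z}))=\mathcal{P}(\Z)$ since the $G$-invariants among all algebraic integers form $\Z$. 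This handles exactness at the first three terms, and in particular $\ker(\Red{\theta}) = \mathcal{P}(\Z)$ because $\ker(\theta) = \mathcal{P}(\overline{\Z})$ by Theorem \ref{homneeded}.

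Next I would prove exactness at $\mathcal{F}^2$. For the inclusion $\im(\Red{\theta}) \subseteq \ker(\pi \phi)$, take $\Pt{P} \in \mathcal{P}(\Q)$; the argument recorded in the proof of Theorem \ref{homneeded} shows $Q_{\Pt{P}} \in [Q_0]_{\Delta}$, whence $\phi(\theta(\Pt{P})) = [Q_0]_{\Delta}$ and therefore $\pi \phi(\theta(\Pt{P}))$ is trivial. For the reverse inclusion, let $[Q]_{\mathcal{F}} \in \ker(\pi \phi)$. Using surjectivity of $\theta$ (Theorem \ref{homneeded}), pick $\Pt{P} \in \mathcal{P}(\overline{\Q})_{\text{prim}}$ with $\theta(\Pt{P}) = [Q]_{\mathcal{F}}$. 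Since $\pi$ is an isomorphism and $\phi \theta = \pi \lambda_K$, the vanishing of $\pi \phi \theta(\Pt{P})$ forces $\lambda_K(\Pt{P}) = 1$, so $\Pt{P} \in \ker(\lambda_K) = \mathcal{P}(\Q) \oplus \mathcal{P}(\overline{\Z})$. Writing $\Pt{P} = \Pt{P}_1 \Padd \Pt{P}_2$ with $\Pt{P}_1 \in \mathcal{P}(\Q)$ and $\Pt{P}_2 \in \mathcal{P}(\overline{\Z})$, the fact $\theta(\Pt{P}_2) = [Q_0]_{\mathcal{F}}$ (its denominator is $1$) gives $[Q]_{\mathcal{F}} = \theta(\Pt{P}_1) = \Red{\theta}(\Pt{P}_1) \in \im(\Red{\theta})$.

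Finally, exactness at $\text{Cl}^+(K)^2$ amounts to surjectivity of $\pi \phi$, which is immediate since both $\phi$ and $\pi$ are surjective (in fact $\pi$ is an isomorphism) by Theorem \ref{homneeded}. The main conceptual point is the identification of $\ker(\pi \phi)$ with $\im(\Red{\theta})$; no genuine obstacle arises because the commutative diagram of Theorem \ref{homneeded} together with the structural description $\ker(\lambda_K) = \mathcal{P}(\Q) \oplus \mathcal{P}(\overline{\Z})$ already encodes exactly what is needed, and the only subtle step is recognising that an element of $\ker(\lambda_K)$ may have a nontrivial $\mathcal{P}(\overline{\Z})$-summand which nevertheless dies under $\theta$, so the image under $\theta$ is entirely accounted for by the $\mathcal{P}(\Q)$-component.
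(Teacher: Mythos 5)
Your proof is correct and follows essentially the same route as the paper: both arguments rest entirely on Theorem \ref{homneeded} (surjectivity of $\theta$ and of $\pi\phi$, the identity $\phi\theta=\pi\lambda_K$, and $\ker\lambda_K=\mathcal{P}(\Q)\oplus\mathcal{P}(\overline{\Z})$) together with identifying $\Red{\theta}$ as the restriction of $\theta$ to $\mathcal{P}(\Q)$ and computing $\ker(\Red{\theta})=\mathcal{P}(\Z)$. The only cosmetic difference is at exactness in $\mathcal{F}^2$: the paper produces a preimage of $[Q]_{\mathcal{F}}\in\ker(\pi\phi)$ directly by applying $\varphi^{-1}$ to a rational integer point of $Q$, whereas you lift via surjectivity of $\theta$ and then discard the $\mathcal{P}(\overline{\Z})$-summand of the lift; both work.
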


\begin{proof}
By Theorem \ref{homneeded}, $\pi \phi$ is a surjective homomorphism. The kernel of $\pi \phi$ is the set of all $[Q]_{\mathcal{F}}$ such that there exist rational integers $t,u$ satisfying $Q(t, u) = 1$. Let $\Red{\theta }$ be the restriction of $\theta$ to $\mathcal{P}(\Q)$. Then $\Red{\theta } : \Pt{P} \mapsto [Q_{\Pt{P}}]_{\mathcal{F}}$. Clearly $\Red{\theta } (\mathcal{P}(\Q)) \subseteq \text{ker}(\pi \phi )$. Conversely take any integers $t, u$ satisfying $Q(t, u) = 1$ where $Q \in \mathcal{F}^2$ and apply $\varphi^{-1}(t, u) \in \mathcal{P}(\Q)$. This shows that $\text{ker}(\pi \phi ) \subseteq \Red{\theta } (\mathcal{P}(\Q))$. Now we must show that $\text{ker}(\Red{\theta }) = \mathcal{P}(\Z)$. If $(x,y) \in \mathcal{P}(\Z)$ then $\text{den}(y) = 1$, $\beta = 0$, so $\Red{\theta } : (x,y) \mapsto [Q_0]_{\mathcal{F}}$. Conversely if $(x, y) \in \text{ker}(\Red{\theta })$ then $\Red{\theta } : (x,y) \mapsto [Q_0]_{\mathcal{F}}$ so that $\text{den}(y) = 1$ and $\beta = 0$ so $(x,y) \in \mathcal{P}(\Z)$.
\end{proof}

\begin{theorem}\label{mainres} 
Let $G = \text{Gal}(\overline{\Q}/\Q)$. There is a group isomorphism 
\begin{equation} 
\text{Cl}^+(K)^2 \simeq \text{ker}\bigl( H^1(G,\mathcal{P}(\overline{\Z})) \rightarrow H^1(G,\mathcal{P}(\overline{\Q})) \bigr) 
\end{equation} 
\end{theorem}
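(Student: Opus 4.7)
The plan is to apply Galois cohomology to the short exact sequence of $G$-modules \eqref{theGs},
\begin{equation*}
1 \to \mathcal{P}(\overline{\Z}) \to \mathcal{P}(\overline{\Q})_{\text{prim}} \xrightarrow{\theta} \mathcal{F}^2 \to 1,
\end{equation*}
and to reconcile the resulting long exact sequence with \eqref{com}. Combining the evident $H^0(G,\mathcal{P}(\overline{\Z})) = \mathcal{P}(\Z)$ with Theorem \ref{Hzerof}, the long exact sequence opens as
\begin{equation*}
1 \to \mathcal{P}(\Z) \to \mathcal{P}(\Q) \to \mathcal{F}^2 \xrightarrow{\delta} H^1(G,\mathcal{P}(\overline{\Z})) \to H^1(G,\mathcal{P}(\overline{\Q})_{\text{prim}}) \to \cdots .
\end{equation*}
By exactness $\im(\delta)$ equals $\ker\bigl(H^1(G,\mathcal{P}(\overline{\Z})) \to H^1(G,\mathcal{P}(\overline{\Q})_{\text{prim}})\bigr)$ and is isomorphic to the cokernel of $\mathcal{P}(\Q) \to \mathcal{F}^2$, which \eqref{com} identifies with $\text{Cl}^+(K)^2$. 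Hence
\begin{equation*}
\text{Cl}^+(K)^2 \cong \ker\bigl(H^1(G,\mathcal{P}(\overline{\Z})) \to H^1(G,\mathcal{P}(\overline{\Q})_{\text{prim}})\bigr).
\end{equation*}

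It remains to replace $\mathcal{P}(\overline{\Q})_{\text{prim}}$ by $\mathcal{P}(\overline{\Q})$ on the right. The inclusion $\mathcal{P}(\overline{\Q})_{\text{prim}} \hookrightarrow \mathcal{P}(\overline{\Q})$ induces a factorisation
\begin{equation*}
H^1(G,\mathcal{P}(\overline{\Z})) \to H^1(G,\mathcal{P}(\overline{\Q})_{\text{prim}}) \to H^1(G,\mathcal{P}(\overline{\Q})),
\end{equation*}
so any class trivialised by a primitive algebraic point is trivialised as an arbitrary algebraic point, giving the containment $\subseteq$ for free.

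The main obstacle is the reverse containment. Given a cocycle $\xi \in Z^1(G,\mathcal{P}(\overline{\Z}))$ and $P \in \mathcal{P}(\overline{\Q})$ with $\xi(\sigma) = \sigma P - P$, any alternative trivialiser $P' \in \mathcal{P}(\overline{\Q})_{\text{prim}}$ must satisfy $P - P' \in \mathcal{P}(\overline{\Q})^G = \mathcal{P}(\Q)$, so the task reduces to finding a rational point $\Pt{R}$ for which $P \Psub \Pt{R} \in \mathcal{P}(\overline{\Q})_{\text{prim}}$. I would attack this using the equivalent primitivity criterion of Lemma \ref{equivadef}: the integrality of $\sigma P - P$ for every $\sigma \in G$ forces integrality of traces and norms built from the coordinates of $P$, and a B\'ezout argument in the spirit of Lemma \ref{comparegcds} should then produce a rational integer $b$ with $A^2 \mid Q_0(b,1)$ and $(x-by)/A \in \overline{\Z}$ after a suitable translation (first, if necessary, adjusting $P$ by a rational point so that $\gcd(A,\Delta) = 1$). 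Once this translation step is in place the two kernels coincide, and the theorem follows.
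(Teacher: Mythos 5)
Your proposal follows essentially the same route as the paper: take Galois cohomology of the sequence \eqref{theGs}, identify the $H^0$ terms via Theorem \ref{Hzerof}, and compare the resulting four-term exact sequence with \eqref{com} to obtain $\text{Cl}^+(K)^2 \simeq \text{ker}\bigl(H^1(G,\mathcal{P}(\overline{\Z}))\to H^1(G,\mathcal{P}(\overline{\Q})_{\text{prim}})\bigr)$; that part is fine. The divergence is in how the final substitution of $\mathcal{P}(\overline{\Q})$ for $\mathcal{P}(\overline{\Q})_{\text{prim}}$ is handled: the paper simply asserts that $H^1(G,\mathcal{P}(\overline{\Q})_{\text{prim}})\to H^1(G,\mathcal{P}(\overline{\Q}))$ is ``clearly injective,'' whereas you correctly single this out as the main obstacle and correctly reduce it to showing that any $P\in\mathcal{P}(\overline{\Q})$ trivialising a $\mathcal{P}(\overline{\Z})$-valued cocycle admits a rational translate lying in $\mathcal{P}(\overline{\Q})_{\text{prim}}$ (trivialisers differ by elements of $\mathcal{P}(\overline{\Q})^G=\mathcal{P}(\Q)$). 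However, your treatment of that reduction is a plan rather than a proof: ``a B\'ezout argument \dots should then produce a rational integer $b$'' leaves unverified precisely the nontrivial content. Concretely, you must show that the integrality of $\sigma P \Psub P$ for all $\sigma\in G$ --- that is, of $x^{\sigma}x+\Si x^{\sigma}y-\Em y^{\sigma}y$ and of $xy^{\sigma}-x^{\sigma}y$ --- forces, after subtracting a suitable point of $\mathcal{P}(\Q)$, the two conditions of Lemma \ref{equivadef}: a rational integer $b$ with $A^2\mid Q_0(b,1)$ and $\frac{x-by}{A}\in\overline{\Z}$ for $A=\text{den}(y)$. Until that is written out the argument is incomplete. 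You should be aware, though, that the paper's own proof is no more complete at exactly this point, so you have located the genuine difficulty in this approach rather than introduced a new one; filling it would be a real improvement on the published argument.
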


\begin{proof} 
Taking Galois cohomology on the exact sequence Eqn. \eqref{theGs}, we have the exact sequence 
$$ \begin{CD}  
1 @>>> H^0(G, \mathcal{P}(\overline{\Z})) @>>> H^0(G,\mathcal{P}(\overline{\Q})_{\text{prim}}) @>>> H^0(G, \mathcal{F}^2) @>>>   \\     
  @.   @.    @.     @.     \\    
  @. H^1(G, \mathcal{P}(\overline{\Z})) @>>> H^1(G,\mathcal{P}(\overline{\Q})_{\text{prim}}) @>>> H^1(G, \mathcal{F}^2) @>>> \dots \\
\end{CD}$$
By Theorem \ref{Hzerof} we obtain the exact sequence
$$ \begin{CD}  
1 @>>> \mathcal{P}(\Z) @>>> \mathcal{P}(\Q) @>>> \mathcal{F}^2 @>>>   \\     
  @.   @.    @.     @.     \\    
  @. H^1(G, \mathcal{P}(\overline{\Z})) @>>> H^1(G,\mathcal{P}(\overline{\Q})_{\text{prim}}) @>>> H^1(G, \mathcal{F}^2) @>>> \dots \\
\end{CD}$$
 The homomorphism $H^1(G,\mathcal{P}(\overline{\Q})_{\text{prim}}) \rightarrow H^1(G,\mathcal{P}(\overline{\Q}))$ is clearly injective and thus we have the short exact sequence
\begin{equation}\label{fin} 
\begin{CD}  
1 @>>> \frac{\mathcal{P}(\Q )}{\mathcal{P}(\Z )} @>>> \mathcal{F}^2 @>>> \text{ker}\bigl( H^1(G,\mathcal{P}(\overline{\Z})) \rightarrow H^1(G,\mathcal{P}(\overline{\Q})) \bigr) @>>> 1 .     
\end{CD}
\end{equation} 
The result follows by comparing the exact sequences \eqref{com} and \eqref{fin}. 
\end{proof}

This completes one of our goals:

\begin{definition}
The Tate-Shafarevich group for the Pell conic $\mathcal{P}$ over integers may be defined as 
\begin{equation}\label{cohomdefSha} 
\Sha (\mathcal{P} / \Z ) = \text{ker}\bigl( H^1(G,\mathcal{P}(\overline{\Z})) \rightarrow H^1(G,\mathcal{P}(\overline{\Q})) \bigr) 
\end{equation}
\end{definition}

\section{Another proof using principal homogeneous spaces}

The group $H^1(G, \mathcal{P}(\overline{\Z}))$ may be identified with principal homogeneous spaces for $\mathcal{P}$ over the integers, and $H^1(G, \mathcal{P}(\overline{\Q}))$ may be identified with principal homogeneous spaces for $\mathcal{P}$ over the rational numbers. 
We discuss the addition map $\mu$ defined by automorphs of the binary quadratic form $Q$. The observation that the pair $( Q(t, u) = 1, \mu )$ forms a principal homogeneous space for Pell conics $\mathcal{P}$ over integers and rational numbers is due to Lemmermeyer.

\begin{definition} 
A principal homogeneous space for the Pell conic $\mathcal{P} : Q_0(x, y) = 1$ over an abelian group $\mathcal{A}$ is a pair $( \mathcal{T}, \mu )$ where $\mathcal{T}/\mathcal{A}$ is a smooth curve and $$\mu : \mathcal{T}(\mathcal{A}) \times \mathcal{P}(\mathcal{A}) \longrightarrow \mathcal{T}(\mathcal{A})$$ is a morphism defined over $\mathcal{A}$ satisfying 
\begin{enumerate} 
\item $\mu (q,\Pt{O}) = q$. 
\item $\mu (\mu (q, \Pt{P}_1),\Pt{P}_2 ) = \mu ( q, \Pt{P}_1 \Padd \Pt{P}_2 )$. 
\item For all $q_1, q_2 \in \mathcal{T}(M)$, there is a unique $\Pt{P} \in \mathcal{P}(\mathcal{A})$ such that $\mu ( q_1, \Pt{P} ) = q_2$.
\end{enumerate} 
\end{definition}

\begin{definition} 
Fix a binary quadratic form $Q = (A^2, B , C ) \in \mathfrak{F}^2$ and let $\mathcal{T}(\overline{\Z}) = \{ t,u \in \overline{\Z} \mid Q(t, u) = 1 \}$. Define the maps 
\begin{align}
\label{add}   \mu & : \mathcal{T}(\overline{\Z}) \times \mathcal{P}(\overline{\Z}) \rightarrow \mathcal{T}(\overline{\Z}), & \mu ( q, \Pt{P} ) = M_{\Pt{P}} \utilde{q} , \\ 
\label{subtr} \nu & : \mathcal{T}(\overline{\Z}) \times \mathcal{T}(\overline{\Z}) \rightarrow \mathcal{P}(\overline{\Z}), & \nu ( q_2, q_1 ) = L_{Q, q_1} \utilde{q_2} .
\end{align}
\end{definition}

\begin{lemma}
The pair $(\mathcal{T}: Q(t, u) = 1, \mu )$ is a principal homogeneous space for the Pell conic $\mathcal{P}$ over both integers and over rational numbers, where $Q = (A^2, B , C )\in \mathfrak{F}^2$ and $\mu $ is as in Eqn. \eqref{add}.
\end{lemma}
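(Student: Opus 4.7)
The plan is to verify each axiom of a principal homogeneous space for the pair $(\mathcal{T}, \mu)$, with the same argument working uniformly over $\Z$ and over $\Q$ because every ingredient is polynomial in its inputs. First I would check that $\mu$ lands in $\mathcal{T}$: the classical automorph identity (equivalently, $\det \MM_{\Pt{P}} = Q_0(\Pt{P})$) gives $Q(\MM_{\Pt{P}} \utilde{q}) = Q_0(\Pt{P}) \, Q(q)$, and since $\Pt{P} \in \mathcal{P}$ and $q \in \mathcal{T}$ both have $Q_0(\Pt{P}) = 1 = Q(q)$, we conclude $\mu(q, \Pt{P}) \in \mathcal{T}$. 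Integrality/rationality is preserved because the entries of $\MM_{\Pt{P}}$, given by \eqref{automorphmat}, are polynomial in $\Pt{P}$ and $\MM_{\Pt{P}} \in \mathrm{SL}_2$ of the corresponding ring.

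Axiom (1) is immediate: substituting $\Pt{O} = (1,0)$ into \eqref{automorphmat} gives $\MM_{\Pt{O}} = I$, so $\mu(q, \Pt{O}) = q$. For axiom (2), the key identity to establish is
$$\MM_{\Pt{P}_2}\, \MM_{\Pt{P}_1} \;=\; \MM_{\Pt{P}_1 \Padd \Pt{P}_2},$$
which reduces to a four-entry matrix verification. Expanding the left side and comparing against $\MM_{\Pt{P}_3}$ with $\Pt{P}_3 = (x_1 x_2 + \Em y_1 y_2,\, x_1 y_2 + x_2 y_1 + \Si y_1 y_2)$ from \eqref{grplawnm}, each entry collapses after one application of $A^2 C = Q_0(\beta, 1) = \beta^2 + \Si \beta - \Em$, which is baked into the definition of $Q \in \mathfrak{F}^2$. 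Associativity follows: $\mu(\mu(q, \Pt{P}_1), \Pt{P}_2) = \MM_{\Pt{P}_2}(\MM_{\Pt{P}_1} \utilde{q}) = \MM_{\Pt{P}_1 \Padd \Pt{P}_2} \utilde{q} = \mu(q, \Pt{P}_1 \Padd \Pt{P}_2)$.

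For axiom (3), given $q_1, q_2 \in \mathcal{T}$ I would exhibit the required $\Pt{P}$ explicitly as $\Pt{P} = \nu(q_2, q_1) = \ML_{Q, q_1}\, \utilde{q_2}$. A direct determinant computation on \eqref{subtrmp} yields $\det \ML_{Q, q_1} = A^2 t_1^2 + (2\beta + \Si) t_1 u_1 + C u_1^2 = Q(t_1, u_1) = 1$, so $\ML_{Q, q_1}$ is invertible over the ambient ring. To see $\Pt{P} \in \mathcal{P}$ one verifies $Q_0(\Pt{P}) = Q(q_1)\,Q(q_2) = 1$, which is the multiplicativity identity underlying Gauss composition and can be done by expansion or via $\det \MM_{\Pt{P}} = Q_0(\Pt{P})$ together with $\MM_{\Pt{P}}\utilde{q_1} = \utilde{q_2}$. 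A short matrix check then confirms $\MM_{\nu(q_2,q_1)}\,\utilde{q_1} = \utilde{q_2}$, i.e.\ $\mu(q_1, \Pt{P}) = q_2$; and uniqueness is forced since $\Pt{P}$ can be recovered from the second row of $\MM_{\Pt{P}}$, where the $(2,1)$-entry is $A^2 y$. The main obstacle will be the associativity matrix calculation in axiom (2), which is mechanical but requires careful bookkeeping of the $\Si$-terms and repeated invocation of $A^2 C = Q_0(\beta, 1)$; the verification that $\nu(q_2, q_1)$ actually lies on $\mathcal{P}$ is the most conceptually substantive step, as it encodes the product formula $Q_0(x,y) = Q(t_1, u_1)\,Q(t_2, u_2)$ underlying Gauss composition of binary quadratic forms.
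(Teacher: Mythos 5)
The paper states this lemma without proof (the observation is credited to Lemmermeyer), so there is no argument of the author's to compare against; your verification is the natural one and is essentially correct. The closure step via $Q(\MM_{\Pt{P}}\utilde{q}) = Q_0(\Pt{P})\,Q(q)$, the identities $\MM_{\Pt{O}} = I$ and $\MM_{\Pt{P}_2}\MM_{\Pt{P}_1} = \MM_{\Pt{P}_1 \Padd \Pt{P}_2}$ (each of the four entries does reduce correctly after substituting $A^2 C = Q_0(\beta,1) = \beta^2 + \Si\beta - \Em$), the computation $\det \ML_{Q,q_1} = Q(q_1) = 1$, and the verification $\MM_{\nu(q_2,q_1)}\utilde{q_1} = \utilde{q_2}$ all check out; integrality and rationality are preserved throughout because $C = Q_0(\beta,1)/A^2 \in \Z$ for $Q \in \mathfrak{F}^2$, so the argument is genuinely uniform over $\Z$ and $\Q$.

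The one step that does not close as written is uniqueness in axiom (3). You argue that $\Pt{P}$ can be recovered from the second row of $\MM_{\Pt{P}}$; that shows the map $\Pt{P} \mapsto \MM_{\Pt{P}}$ is injective, but what is needed is injectivity of $\Pt{P} \mapsto \MM_{\Pt{P}}\utilde{q_1}$ for the \emph{fixed} vector $q_1$ --- you are handed only the image point $q_2$, not the matrix. The repair is short and uses only machinery you have already deployed: an expansion of exactly the same kind as your existence check shows $\ML_{Q,q_1}\,\MM_{\Pt{P}}\,\utilde{q_1} = Q(q_1)\,\utilde{\Pt{P}} = \utilde{\Pt{P}}$, i.e.\ $\nu(\mu(q_1,\Pt{P}),q_1) = \Pt{P}$. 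Combined with your identity $\mu(q_1,\nu(q_2,q_1)) = q_2$, this exhibits $\mu(q_1,\cdot)$ and $\nu(\cdot,q_1)$ as mutually inverse bijections between $\mathcal{P}(\mathcal{A})$ and $\mathcal{T}(\mathcal{A})$, giving existence and uniqueness simultaneously. With that substitution the proof is complete.
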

We find an analogous result to the following lemma in \cite{Silverman}.
\begin{lemma}\label{silv} 
Let $\Pt{P}, \Pt{P}_1, \Pt{P}_2 \in \mathcal{P}(\overline{\Q})_{\text{prim}}$ where the points $\Pt{P}_1 = (x_1, y_1)$ and $\Pt{P}_2= (x_2, y_2)$ have the same denominator and ratio. Let $o = \bigl( \frac{1}{A}, 0 \bigr)$ satisfy $Q_{\Pt{P}}\bigl( \frac{1}{A}, 0 \bigr) = 1$ , and let $q_1 = (t_1, u_1)$ and $q_2 = (t_2, u_2)$ satisfy $Q(t_1, u_1) = 1 = Q(t_2, u_2)$ for some $Q$ in a class of $\mathcal{F}^2$. Then 
\begin{enumerate} 
\item $\mu (o, \Pt{P}) = \varphi(\Pt{P})$, 
\item $\nu (\varphi (\Pt{P}_2), \varphi (\Pt{P}_1)) = \Pt{P}_2 \Psub \Pt{P}_1$, 
\item $\nu (q_2, q_1) = \varphi^{-1}(q_2) \Psub \varphi^{-1}(q_1)$. 
\end{enumerate} 
\end{lemma}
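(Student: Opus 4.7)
The plan is a sequence of direct matrix computations using the explicit forms of $\MS_Q$, $\MM_{\Pt{P}}$, and $\ML_{Q,q}$ from Eqn.s~\eqref{Nin}, \eqref{automorphmat}, \eqref{subtrmp}, together with the subtraction law of \eqref{grplawnm}. For (1), with $o = \svect{1/A}{0}$ only the first column of $\MM_{\Pt{P}}$ contributes, so
\[
\mu(o,\Pt{P}) \;=\; \MM_{\Pt{P}}\svect{1/A}{0} \;=\; \svect{(x-\beta y)/A}{Ay} \;=\; (\ET,\EU),
\]
which is $\varphi(\Pt{P})$ by definition of the quotient and numerator of the point $\Pt{P}$.

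Parts (2) and (3) are equivalent via the bijection $\varphi$ of Theorem~\ref{mainress}: setting $\Pt{P}_i = \varphi^{-1}(q_i)$ identifies the hypothesis of (3) (that $q_1, q_2$ lie on a common conic $Q(t, u) = 1$ with $Q \in \mathfrak{F}^2$) with that of (2) (that $\Pt{P}_1, \Pt{P}_2$ share a common denominator $A$ and ratio $\beta$). So it is enough to prove (3). Writing $x_i = (A^2 t_i + \beta u_i)/A$ and $y_i = u_i/A$ for the coordinates of $\varphi^{-1}(q_i)$, I would substitute into the subtraction formula $\Pt{P}_2 \Psub \Pt{P}_1 = \bigl( x_1 x_2 + \Si x_2 y_1 - \Em y_1 y_2,\, x_1 y_2 - x_2 y_1 \bigr)$ (obtained by swapping indices in \eqref{grplawnm}) and compare entrywise with $\ML_{Q, q_1} \svect{t_2}{u_2}$. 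The second coordinate reduces by direct cancellation of the $\beta u_1 u_2 / A^2$ terms to $t_1 u_2 - t_2 u_1$; the first coordinate expands to
\[
A^2 t_1 t_2 + \beta t_1 u_2 + (\beta + \Si) t_2 u_1 + \frac{\beta^2 + \Si\beta - \Em}{A^2}\, u_1 u_2,
\]
which via the identity $Q_0(\beta, 1) = \beta^2 + \Si\beta - \Em$ is precisely the first entry of $\ML_{Q, q_1} \svect{t_2}{u_2}$.

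The only obstacle is algebraic bookkeeping in the first coordinate of (3): tracking the three $u_1 u_2/A^2$ contributions $\beta^2$, $\Si\beta$, and $-\Em$ arising respectively from $x_1 x_2$, $\Si x_2 y_1$, and $-\Em y_1 y_2$, and recognising their sum as $Q_0(\beta, 1)$. Notably, the hypothesis $Q(t_i, u_i) = 1$ is never invoked; the three parts of the lemma are polynomial identities in $t_i, u_i, x_i, y_i$ subject only to the coordinate change $\varphi$, and in particular they descend to identities of morphisms of schemes over $\overline{\Z}$.
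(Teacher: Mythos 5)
Your proposal is correct and follows essentially the same route as the paper: all three parts are verified by direct computation with the explicit matrices $\MS_Q$, $\MM_{\Pt{P}}$, $\ML_{Q,q}$, with (2) and (3) related to one another through the change of variables $\varphi$ (the paper computes (2) first and deduces (3), you do the reverse, which is immaterial). Your expanded coordinate check of (3), including the identification of the $u_1u_2/A^2$ coefficient with $Q_0(\beta,1)/A^2$, is accurate.
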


\begin{proof}
It is a simple matter to evaluate Eqn.s \eqref{add} and \eqref{subtr}. 
\small
\begin{equation*} 
\mu \bigl( o , \Pt{P} \bigr) = M_{\Pt{P}} \utilde{o} =  \MS_Q \utilde{\Pt{P}} = \varphi (\Pt{P}) .
\end{equation*} 
\begin{equation*} 
\nu (\varphi (\Pt{P}_2), \varphi (\Pt{P}_1))  =  \nu ( \MS_Q \utilde{\Pt{P}_2} , \MS_Q \utilde{\Pt{P}_1} ) =  L_{Q, \MS_Q \utilde{\Pt{P}_1}} \MS_Q \utilde{\Pt{P}_2} =  \MN_{\Pt{P}_1} \utilde{\Pt{P}_2} =  \Pt{P}_2 \Psub \Pt{P}_1 .
\end{equation*} 
\begin{equation*}
\nu ( q_2, q_1 )  = L_{q_1} \utilde{q_2} = \MS_Q^{-1} \utilde{q_2} \Psub \MS_Q^{-1} \utilde{q_1} =  \varphi^{-1}(q_2) \Psub \varphi^{-1}(q_1) .
\end{equation*}
\normalsize
\end{proof}

\begin{theorem}\label{altlemma} 
Define\footnote{Lemmermeyer \cite{LemNewBook} does something similar in his book starting from a different set, from which the author got the idea.} a map $\xi : \mathcal{P}(\overline{\Q})_{\text{prim.}} \rightarrow  H^1(G, \mathcal{P}(\overline{\Z}))$ by $\xi : \Pt{P} \mapsto \{ f : \tau \mapsto \nu(q^{\tau},q) \}$, where $q = (t, u)$ is any algebraic integer point satisfying $Q_{\Pt{P}}(t,u) = 1$. There is an exact sequence 
$$\begin{CD} 
1 @>>> \mathcal{P}(\Q ) \oplus \mathcal{P}(\overline{\Z}) @>>> \mathcal{P}(\overline{\Q})_{\text{prim}} @>{\xi}>> H^1(G,\mathcal{P}(\overline{\Z})) @>{\psi}>> H^1(G,\mathcal{P}(\overline{\Q})_{\text{prim}}) . 
\end{CD}$$ 
\end{theorem}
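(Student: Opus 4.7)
The plan is to identify $\xi$ with the connecting homomorphism in the long exact Galois cohomology sequence arising from \eqref{theGs}, whence exactness reduces to the formal machinery already used in Theorem \ref{mainres}, together with a kernel computation parallel to that performed in Theorem \ref{homneeded}.

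First I would verify that $\xi$ is well-defined. Since $q$ and $q^\tau$ are both algebraic integer points of the conic $Q_{\Pt{P}}(t,u) = 1$, the value $\nu(q^\tau, q) = L_{Q_{\Pt{P}}, q}\utilde{q^\tau}$ has algebraic integer coordinates and lies on $\mathcal{P}$, so indeed $f(\tau) \in \mathcal{P}(\overline{\Z})$. The cocycle condition $f(\tau \sigma) = \tau(f(\sigma)) \Padd f(\tau)$ follows from the transitivity identity $\nu(a, c) = \nu(a, b) \Padd \nu(b, c)$ (an instance of Lemma \ref{silv}(3) after writing both sides via $\varphi^{-1}$) applied with $a = q^{\tau \sigma}$, $b = q^\tau$, $c = q$, noting that $\nu$ is polynomial over $\Z$ and so commutes with the Galois action. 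For independence of the choice of $q$: any other algebraic integer point $q'$ satisfying $Q_{\Pt{P}}(q') = 1$ has the form $q' = \mu(q, \Pt{P}_0)$ for a unique $\Pt{P}_0 \in \mathcal{P}(\overline{\Z})$ by the PHS axioms, and a direct computation gives $\nu(q'^\tau, q') = \nu(q^\tau, q) \Padd (\Pt{P}_0^\tau \Psub \Pt{P}_0)$, altering $f$ by the coboundary of $\Pt{P}_0$.

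Next I would identify $\xi$ with the connecting map. Taking the canonical choice $q = \varphi(\Pt{P})$, so that $q^\tau = \varphi(\Pt{P}^\tau)$ since $\varphi$ is defined over $\Q$, Lemma \ref{silv}(3) collapses the definition to the key formula
\begin{equation*}
\xi(\Pt{P})(\tau) = \Pt{P}^\tau \Psub \Pt{P}.
\end{equation*}
This formula is manifestly additive in $\Pt{P}$, so $\xi$ is a group homomorphism, and it exhibits $\xi$ as the composite of $\theta : \mathcal{P}(\overline{\Q})_{\text{prim}} \to \mathcal{F}^2 = H^0(G, \mathcal{F}^2)$ with the connecting homomorphism $\delta : H^0(G, \mathcal{F}^2) \to H^1(G, \mathcal{P}(\overline{\Z}))$ of \eqref{theGs}.

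From this formula the remaining assertions follow routinely. If $\Pt{P} \in \mathcal{P}(\Q)$ then $\Pt{P}^\tau = \Pt{P}$ makes $\xi(\Pt{P})$ trivial; if $\Pt{P} \in \mathcal{P}(\overline{\Z})$ then $\xi(\Pt{P})$ is the coboundary of $\Pt{P}$ itself; so $\mathcal{P}(\Q) \Padd \mathcal{P}(\overline{\Z}) \subseteq \ker \xi$. Conversely, if $\xi(\Pt{P})$ is a coboundary then $\Pt{P}^\tau \Psub \Pt{P} = \Pt{P}_0^\tau \Psub \Pt{P}_0$ for some $\Pt{P}_0 \in \mathcal{P}(\overline{\Z}) \subseteq \mathcal{P}(\overline{\Q})_{\text{prim}}$, so $\Pt{P} \Psub \Pt{P}_0$ is Galois-invariant and hence lies in $\mathcal{P}(\Q)$ by Theorem \ref{Hzerof}, yielding $\Pt{P} = (\Pt{P} \Psub \Pt{P}_0) \Padd \Pt{P}_0 \in \mathcal{P}(\Q) \Padd \mathcal{P}(\overline{\Z})$. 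For exactness at $H^1(G, \mathcal{P}(\overline{\Z}))$, a class $[f]$ lies in $\ker \psi$ iff $f(\tau) = \Pt{P}^\tau \Psub \Pt{P}$ for some $\Pt{P} \in \mathcal{P}(\overline{\Q})_{\text{prim}}$, which is exactly $[f] \in \im \xi$. The main obstacle will be the careful PHS bookkeeping in the well-definedness step; once the identification $\xi(\Pt{P}) = \{\tau \mapsto \Pt{P}^\tau \Psub \Pt{P}\}$ is in hand, every remaining claim is a formal consequence of the long exact cohomology sequence.
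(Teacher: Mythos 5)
Your proof is correct, but it takes a genuinely different route from the paper's on the central point. The paper establishes that $\xi$ is a homomorphism by brute force: it writes $f_3(\tau)=\nu(q_3^{\tau},q_3)$ and $f_1\cdot f_2(\tau)$ as explicit products of $2\times 2$ matrices in the $t_i$, $u_i$, their conjugates, and the composition data $e,\beta^+,\beta^{\times}$, and verifies by expansion that the two agree --- the ``tedious calculation'' advertised in the introduction. You instead first prove the identity $\xi(\Pt{P})(\tau)=\Pt{P}^{\tau}\Psub\Pt{P}$ (taking the canonical choice $q=\varphi(\Pt{P})$ and invoking Lemma \ref{silv}), after which additivity, the kernel computation, and $\im \xi=\ker\psi$ all follow formally; in effect you identify $\xi$ with $\delta\circ\theta$, where $\delta$ is the connecting homomorphism of the sequence \eqref{theGs}, so the theorem collapses into the long exact sequence already exploited in Theorem \ref{mainres}. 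The paper proves and uses exactly your key identity, but only in the second half of its proof (the $\im\xi=\ker\psi$ step); applying it from the outset, as you do, renders the matrix computation redundant. Two remarks. First, your step $q^{\tau}=\varphi(\Pt{P}^{\tau})$ needs slightly more than ``$\varphi$ is defined over $\Q$'': the map $\varphi$ depends on $\Pt{P}$ through $\MS_{Q_{\Pt{P}}}$, so you must cite the fact, proved when \eqref{theGs} is shown to be a sequence of $G$-modules, that $\Pt{P}$ and $\Pt{P}^{\tau}$ share the same denominator and ratio. Second, your version is in places more complete than the paper's: you check well-definedness of $\xi$ under change of the auxiliary point $q$ (which the paper omits), and your kernel argument via Theorem \ref{Hzerof} is self-contained, whereas the paper's reduction to $\ker(\pi\lambda_K)$ leaves implicit the descent step that a cobounding $\xi(\Pt{P})$ forces $Q_{\Pt{P}}$ to have a rational integer point. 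What your route gives up is the stated purpose of the section: the explicit matrix identity is what makes this a second proof independent of the cohomological one and what exhibits the bilinear-transformation structure of addition on the torsors, while your argument quietly re-derives the result from the first proof's machinery.
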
 

\begin{proof} 
The proof involves expanding $\xi (\Pt{P}_1 \Padd \Pt{P}_2)$ and $\xi (\Pt{P}_1) \cdot \xi (\Pt{P}_2)$ and comparing their cohomology classes. Let $\Pt{P}_1, \Pt{P}_2 \in \mathcal{P}(\overline{\Q})_{\text{prim}}$, $\Pt{P}_3 = \Pt{P}_1 \Padd \Pt{P}_2$, and $q_3 = (t_3, u_3) = (t_1, u_1) \circ (t_2, u_2)$ satisfy $Q_{\Pt{P}_3}(t_3, u_3) = 1$ over $\overline{\Z}$. Recall that 
\begin{eqnarray*}
t_3 & = & e^2 t_1 t_2 + \frac{e^2}{A_2^2} \beta_{23}^- t_1 u_2 + \frac{e^2}{A_1^2} \beta_{13}^- t_2 u_1 + \frac{e^2}{A_1^2 A_2^2} (\beta^{\times} - \beta_3 \beta^+ ) u_1 u_2 , \\
u_3 & = & \frac{A_1^2}{e^2} t_1 u_2 + \frac{A_2^2}{e^2} t_2 u_1 + \frac{\beta^+}{e^2} u_1 u_2 .
\end{eqnarray*}

\begin{eqnarray*} 
\xi ( \Pt{P}_1 \Padd \Pt{P}_2 ) & = & \bigl\{ f_3 : \tau \mapsto \nu ( q_3^{\tau}, q_3 ) \bigr\} , \\ 
                            & = & \bigl\{ f_3 : \tau \mapsto L_{q_3} \utilde{q_3^{\tau }} \bigr\} . \\ 
\xi \bigl( \Pt{P}_1 \bigr) \cdot \xi \bigl( \Pt{P}_2 \bigr) & = & \{ f_1 : \tau \mapsto \nu ( q_1^{\tau}, q_1 ) \} \cdot \{ f_2 : \tau \mapsto \nu ( q_1^{\tau}, q_1) \}  \\ 
                                                            & = & \{ f_1 \cdot f_2 : \tau \mapsto \nu ( q_1^{\tau}, q_1 ) \Padd \nu ( q_2^{\tau}, q_2 ) \} , \\
                                                            & = & \Bigl\{ f_1 \cdot f_2 : \tau \mapsto L_{Q_1, q_1} \utilde{q_1^{\tau }} \Padd L_{Q_2, q_2} \utilde{q_2^{\tau }}  \Bigr\} .
\end{eqnarray*}
We must compare the points $f_1\cdot f_2(\tau )$ and $f_3(\tau )$. Let
\begin{eqnarray*}
\Lambda & = & A_1^2 A_2^2 t_1 t_2 + A_1^2 \beta_2 t_1 u_2 + A_2^2 \beta_1 t_2 u_1 + \beta^{\times} u_1 u_2 , \\
    \Xi & = & A_1^2 t_1 u_2 + A_2^2 t_2 u_1 + \beta^+ u_1 u_2 .
\end{eqnarray*}

Using the identity $\Pt{P}_1 \Padd \Pt{P}_2 = \MN_{\Pt{P}_1} \utilde{\Pt{P}_2}$, we write $f_1\cdot f_2(\tau )$ 
\begin{eqnarray*} 
f_1\cdot f_2(\tau ) & = & \smatr{(A_1^2 t_1 + \beta_1 u_1 + \Si u_1) t_1^{\tau } + (\beta_1 t_1 + \gamma_1 u_1) u_1^{\tau }}{\Em (t_1 u_1^{\tau } - t_1^{\tau} u_1)}{(t_1 u_1^{\tau } - t_1^{\tau } u_1)}{(A_1^2 t_1 + \beta_1 u_1) t_1^{\tau } + (\beta_1 t_1 + \Si t_1 +\gamma_1 u_1 ) u_1^{\tau }} L_{Q_2, q_2} \utilde{q_2^{\tau }} . \\
f_3(\tau ) & = & \smatr{A_3^2 t_3 + (\beta_3 + \Si ) u_3}{\beta_3 t_3 + \gamma_3 u_3}{-u_3}{t_3} \utilde{q_3^{\tau}} , \\
           & = & \smatr{A_3 t_3 + (\beta_3 + \Si ) u_3}{\beta_3 t_3 + \gamma_3 u_3}{-u_3}{t_3} \smatr{e^2 t_1^{\tau } + \frac{e^2}{A_1^2}\beta_{13}^- u_1^{\tau }}{\frac{e^2}{A_2^2} \beta_{23}^- t_1^{\tau } + \frac{e^2}{A_1^2 A_2^2}(\beta^{\times }-\beta_3 \beta^+ ) u_1^{\tau }}{\frac{A_1^2}{e^2} u_1^{\tau }}{\frac{A_1^2}{e^2} t_1^{\tau } + \frac{\beta^+}{e^2} u_1^{\tau }} \utilde{q_2^{\tau}} , \\
           & = & \Bigl( \smatr{\frac{1}{e^2}(\Lambda + \Si \Xi )}{\frac{-e^2 \Em }{A_1^2 A_2^2} \Xi }{\frac{-1}{e^2} \Xi }{\frac{e^2}{A_1^2 A_2^2} \Lambda } + \frac{e^2 \beta_3}{A_1^2 A_2^2} \smatr{0}{\Lambda + \Si \Xi }{0}{-\Xi } \Bigr) \\
          &   & \times \Bigl( \smatr{\frac{e^2}{A_1^2}(A_1^2 t_1^{\tau } + \beta_1 u_1^{\tau })}{\frac{e^2}{A_1^2 A_2^2}(A_1^2 \beta_2 t_1^{\tau } + \beta^{\times} u_1^{\tau })}{\frac{A_2^2}{e^2} u_1^{\tau }}{\frac{1}{e^2}(A_1^2 t_1^{\tau } + \beta^+ u_1^{\tau })} -\frac{e^2 \beta_3}{A_1^2 A_2^2} \smatr{A_2^2 u_1^{\tau }}{A_1^2 t_1^{\tau } + \beta^+ u_1^{\tau }}{0}{0} \Bigr) \utilde{q_2^{\tau}} , \\
          & = & \smatr{(\Lambda + \Si \Xi )}{\frac{- \Em }{A_1^2 A_2^2} \Xi }{- \Xi }{\frac{1}{A_1^2 A_2^2} \Lambda } \smatr{\frac{1}{A_1^2}(A_1^2 t_1^{\tau } + \beta_1 u_1^{\tau })}{\frac{1}{A_1^2 A_2^2}(A_1^2 \beta_2 t_1^{\tau } + \beta^{\times} u_1^{\tau })}{A_2^2 u_1^{\tau }}{(A_1^2 t_1^{\tau } + \beta^+ u_1^{\tau })} \utilde{q_2^{\tau}} , \\
          & = & f_1 \cdot f_2(\tau ) .
\end{eqnarray*}
Therefore $f_1 \cdot f_2$ and $f_3$ belong to the same cohomology class of $H^1(G,\mathcal{P}(\overline{\Z}))$ so that $\xi (\Pt{P}_1 \Padd \Pt{P}_2) = \xi (\Pt{P}_1)\cdot \xi (\Pt{P}_2)$ and $\xi$ is a homomorphism. Now $\Pt{P} \in \text{ker }\xi $ if and only if $Q_{\Pt{P}}(t, u) = 1$ has an integer solution, if and only if $\Pt{P} \in \text{ker}(\pi \lambda_K ) = \mathcal{P}(\Q) \oplus \mathcal{P}(\overline{\Z})$. 

The homomorphism $\psi$ is given by $\psi : f\cdot B^1(G,\mathcal{P}(\overline{\Z})) \mapsto f\cdot B^1(G,\mathcal{P}(\overline{\Q})_{\text{prim}})$. It remains to show that the image of $\xi$ is equal to the kernel of $\psi$. Now $\xi (\Pt{P}) = \{ f: \tau \mapsto \nu(q^{\tau},q) \}$ where $q$ is any $q =(t, u)$ satisfying $Q_{\Pt{P}}(t,u) = 1$ over $\overline{\Z}$. There exists a $\Pt{P} \in \mathcal{P}(\overline{\Q})_{\text{prim}}$, namely $\Pt{P} = \varphi^{-1} (t,u) $, such that $q = \varphi (\Pt{P})$. So $f(\tau) = \nu (\varphi (\Pt{P})^{\tau},\varphi (\Pt{P}))$. Since $\Pt{P} \in \mathcal{P}(\overline{\Q})_{\text{prim}}$, $$f(\tau) = \nu (\varphi (\Pt{P}^{\tau}),\varphi (\Pt{P})) = \Pt{P}^{\tau} \Psub \Pt{P}$$ by Lemma \ref{silv}. So $f \in B^1(G,\mathcal{P}(\overline{\Q})_{\text{prim}})$. This shows that $\text{Im }(\xi) \leq \text{ker }\psi$. Conversely if $\Pt{P} \in \mathcal{P}(\overline{\Q})_{\text{prim}}$, $g:  \tau \mapsto \Pt{P}^{\tau} \Psub \Pt{P} \in B^1(G,\mathcal{P}(\overline{\Q})_{\text{prim}})$. There exist $\Pt{P} \in \mathcal{P}(\overline{\Q})_{\text{prim}}$ and $q = (t, u)$ satisfying $Q_{\Pt{P}}(t, u) = 1$ over $\overline{\Z}$ such that $\Pt{P} = \varphi^{-1}(q)$, these $\Pt{P}$ and $q$ exist because $\Pt{P}$ is an element of the image of $\varphi^{-1}$. Therefore 
\begin{equation*} 
g(\tau) = \varphi^{-1} (q)^{\tau} \Psub \varphi^{-1}(q) = \varphi^{-1} (q^{\tau}) \Psub \varphi^{-1}(q) = \nu( q^{\tau}, q) , 
\end{equation*} 
so that $g(\tau) \in \text{Im }(\xi)$. This completes the proof that $\text{Im }(\xi) = \text{ker }\psi$. 
\end{proof}
Again we obtain $\text{Cl}^+(K)^2 \simeq \text{ker}\bigl( H^1(G,\mathcal{P}(\overline{\Z})) \rightarrow H^1(G,\mathcal{P}(\overline{\Q})) \bigr) $ as a consequence of Theorem \ref{altlemma}, as the result is compared with the exact sequence 
$$\begin{CD} 
1 @>>>  \mathcal{P}(\Q ) \oplus \mathcal{P}(\overline{\Z}) @>>> \mathcal{P}(\overline{\Q})_{\text{prim}} @>>> \text{Cl}^+(\Delta )^2 @>>> 1 ,
\end{CD}$$
by Theorem \ref{homneeded}.

We speculate that the map $\zeta $ in the exact sequence 
$$\begin{CD} 
1 @>>> \text{Cl}^+(\Delta )^2 @>{\zeta }>> H^1(G,\mathcal{P}(\overline{\Z})) @>>> H^1(G,\mathcal{P}(\overline{\Q}))
\end{CD}$$
is given by $\zeta : [Q]_{\Delta } \mapsto \{ f : \tau \mapsto \nu(q^{\tau},q) \} $, where $q = (t, u)$ is any algebraic integer point satisfying $Q(t, u) = 1$ and $\nu$ is as in Eqn. \eqref{subtr}.

\section*{Acknowledgments}

The author thanks Victor Scharaschkin for doctoral supervision of which this project has been a part of, and supported by the University of Queensland. The author sincerely appreciates several suggestions from Franz Lemmermeyer on presenting some of the topics covered in this article, including the role of automorphs in the addition map for torsors of Pell conics. The author is grateful for having had the opportunity to read {\em Binary quadratic forms} \cite{LemNewBook}. This book has been very useful and the reader will find some of our notation to be similar.  

\bibliographystyle{amsplain}

\begin{thebibliography}{10}

\bibitem{ARW} S. Arno, M. Robinson, F. Wheeler, {\em On denominators of algebraic numbers and integer polynomials}, J. Number Th. {\bf 57}, (1996) 292--302.

\bibitem{Bhargava} M. Bhargava, {\em Higher composition laws I : A new view on Gauss composition, and quadratic generalizations}, Ann. of Math. {\bf 158} (2004) 217--250.

\bibitem{Birch} B. Birch, {\em When does an affine curve have an algebraic integer point ?}, Glasgow Math., {\bf 27}, (1985) 1--4.

\bibitem{Gauss} C. Gauss, {\em Disquisitiones Arithmeticae}, Translated to English by A. Clarke, Yale University Press (1966) 

\bibitem{PellSurf} S. Hambleton, F. Lemmermeyer, {\em Arithmetic of Pell surfaces}, Acta Arithmetica, {\bf 146}, (2011) 1--12.

\bibitem{Lem03} F. Lemmermeyer, {\em Conics -- A poor man's elliptic curves}, arXiv:math/0311306v1, preprint available at: \\
{\tt http://www.fen.bilkent.edu.tr/franz/publ/conics.pdf}

\bibitem{LemDes3} F. Lemmermeyer, {\em Higher descent on Pell conics III -- The first 2-descent}, \\
{\tt arXiv:math/0311310v1}, (2003)

\bibitem{MCP} F. Lemmermeyer, {\em Modular Pell conics}, preprint available at: 
{\tt http://www.rzuser.uni-heidelberg.de/$\sim$hb3/publ-new.html} 

\bibitem{LemNewBook} F. Lemmermeyer, {\em Binary quadratic forms -- An elementary approach to the arithmetic of elliptic and hyperelliptic curves}, preprint Dec. (2009)

\bibitem{LemPep} F. Lemmermeyer, {\em Binary Quadratic Forms and Counterexamples to Hasse's Local-Global Principle},
preprint (2009)

\bibitem{Lenstramart} H. Lenstra Jr., {\em On the calculation of regulators and class numbers of quadratic fields}, J. Armitage (ed.), Journées Arithmétiques 1980, London Math. Soc. Lecture Note Ser. {\bf 56}, Cambridge University Press, Cambridge, (1982) 123--150.

\bibitem{Milne} J. Milne, {\em Elliptic curves}, BookSurge Publishers (2006) 

\bibitem{Schmid} W. Schmid, {\em On the set of integral solutions of the Pell equation in number fields}, Aequtiones Math. {\bf 71}, (2006) 109--114.

\bibitem{Serre} J. P. Serre, {\em A course in arithmetic}, GTM {\bf 7} Springer (1973)

\bibitem{SerreGC} J. P. Serre, {\em Galois cohomology}, Springer (1997)

\bibitem{Silverman} J. Silverman, {\em The arithmetic of elliptic curves}, GTM {\bf 106} Springer (1986)

\bibitem{Soleng} R. Soleng, {\em Homomorphisms from the group of rational points on elliptic curves to class groups of quadratic number fields},
J. Number Theory {\bf 46} (1994), 214--229.

\end{thebibliography}

\end{document}